\DeclareMathOperator{\pa}{pa}
\DeclareMathOperator{\nd}{nd}
\DeclareMathOperator{\var}{var}
\DeclareMathOperator{\cov}{cov}
\newcommand{\XVect}{X}
\newcommand{\noiseVect}{\epsilon}
\newcommand\independent{\protect\mathpalette{\protect\independenT}{\perp}}
\def\independenT#1#2{\mathrel{\rlap{$#1#2$}\mkern2mu{#1#2}}}
\newcommand\notindependent{\!\perp\!\!\!\!\not\perp\!}
\newtheorem{theorem}{Theorem}[section]
\newtheorem{lemma}[theorem]{Lemma}
\theoremstyle{definition}
\newtheorem{definition}[theorem]{Definition}
\newtheorem{example}[theorem]{Example}
\theoremstyle{remark}
\begin{document}

\title{Learning directed acyclic graph models based on sparsest permutations}

\author{Garvesh Raskutti\affil{a}, Caroline Uhler\affil{b}\corrauth}

\address{%
\affilnum{a}University of Wisconsin, Department of Statistics, Madison, WI, USA\\
\affilnum{b}Massachusetts Institute of Technology, Laboratory for Information and Decision Systems, and Institute for Data, Systems and Society, Cambridge, MA, USA
}

\corremail{cuhler@mit.edu}

\received{30 January 2018}
\accepted{15 March 2018}

\begin{abstract}
We consider the problem of learning a Bayesian network or directed acyclic graph model from observational data. A number of constraint-based, score-based and hybrid algorithms have been developed for this purpose. Statistical consistency guarantees of these algorithms rely on the faithfulness assumption, which has been shown to be restrictive especially for graphs with cycles in the skeleton. We here propose the sparsest permutation (SP) algorithm, showing that learning Bayesian networks is possible under strictly weaker assumptions than faithfulness. This comes at a computational price, thereby indicating a statistical-computational trade-off for causal inference algorithms. In the Gaussian noiseless setting we prove that the SP algorithm boils down to finding the permutation of the variables with the sparsest Cholesky decomposition of the inverse covariance matrix, which is equivalent to $\ell_0$-penalized maximum likelihood estimation. We end with a simulation study showing that in line with the proven stronger consistency guarantees the SP algorithm compares favorably to standard causal inference algorithms in terms of accuracy for a given sample size. 
\end{abstract}

\keywords{Causal inference; Bayesian network; conditional independence; faithfulness; Markov property}

\maketitle

\section{Introduction}

A fundamental goal in many scientific problems is to determine causal or directional relationships between variables in a system. A useful simplification is to assume that the causal structure is modeled by a \emph{directed acyclic graph} (\emph{DAG}) or a \emph{Bayesian network}. Given a DAG $G := ([p], E)$ with vertices $[p] := \{1,2\dots,p\}$ and directed edges $E$, a Bayesian network associates to each node $i\in[p]$ a random variable $X_i$. Missing edges in $G$ entail conditional independence (CI) relations, namely 
$
X_i \independent X_{\nd(i)\backslash\pa(i)} \,\mid\, X_{\pa(i)},
$
where $\nd(i)$ and $\pa(i)$ respectively denote the \emph{nondesendants} and \emph{parents} of vertex $i$ in $G$.   
A joint probability distribution $\mathbb{P}$ over $[p]$ is said to satisfy the \emph{Markov property} (or \emph{be Markov}) with respect to $G$ if it entails these CI relations. In fact, the complete set of CI relations implied by the Markov property can be read off from the DAG by the so-called \emph{d-separation} statements that hold in $G$~\citep[Section 3.2.2]{Lauritzen}.

Given $n$ i.i.d.~samples from a distribution $\mathbb{P}$ that is Markov to a DAG $G^*$, a fundamental problem in causal inference is to infer $G^*$. Unfortunately, this problem is illposed, since several DAGs can encode the same set of CI relations. Such DAGs are said to be \emph{Markov equivalent}. 
Thus, our goal is to identify the \emph{Markov equivalence class} denoted $\mathcal{M}(G^*)$. 

A number of algorithms have been developed for this purpose and they can broadly be classified into \emph{constraint-based}, \emph{score-based}, and \emph{hybrid} methods. Constraint-based methods such as the prominent \emph{SGS}~\citep{GSSK87} and \emph{PC}~\citep{spirtes:00} algorithms treat causal inference as a constraint-satisfaction problem with the constraints being CI relations. On the other hand, score-based approaches assign a score function such as the Bayesian Information Criterion (BIC) to each DAG and optimize the score. Since searching over the space of DAGs is in general NP-complete~\citep{chickering:95}, the optimization is usually performed greedily such as in the prominent \emph{Greedy Equivalence Search} (\emph{GES})~\citep{chick02}. Finally, hybrid methods have also been considered; \emph{Max-Min Hill-Climbing} (\emph{MMHC})~\citep{Tsamardinos06}, for example, alternates between constraint- and score-based updates.

A minimal requirement for causal inference algorithms is (\emph{pointwise}) \emph{consistency}, i.e.~outputting the correct Markov equivalence class $\mathcal{M}(G^*)$ given 
a distribution $\mathbb{P}$ that is Markov to $G^*$. Consistency has been proven for the SGS and PC algorithms~\citep{spirtes:00} as well as for GES~\citep{chick02} under the so-called \emph{faithfulness} assumption.

\begin{definition}\citep{spirtes:00}
	\label{AssFaith}
	A distribution $\mathbb{P}$ is \emph{faithful} to a DAG $G$ if for any $j,k\in [p]$ and any $S\subset [p]\setminus\{j,k\}$,
	$$k \textrm{ is d-separated from } j \textrm{ given } S \qquad\Longleftrightarrow\qquad X_j\independent X_k \mid X_S.$$
\end{definition}

The faithfulness assumption requires that $\emph{all}$ CI relations of $\mathbb{P}$ are encoded in $G^*$. Unfortunately, this assumption is very sensitive to hypothesis testing errors for inferring CI relations from data; almost-violations are frequent especially in graphs with cycles in the skeleton~\citep{UhlerRasBuYu}. 
A number of attempts have been made to modify constraint-based algorithms to adjust for weaker conditions than faithfulness (e.g.,~\citet{Lemiere,RamseySpirtesZhang, ZhangSpirtes08, SpirtesZhang13}), always at a computational cost~\citep{Spirtes_3faces}. The weakest known sufficient conditions for identifying $\mathcal{M}(G^*)$ are given by the \emph{restricted-faithfulness} assumption.

\begin{definition} \citep{RamseySpirtesZhang}
	\label{RestFaith}
	A distribution $\mathbb{P}$ satisfies the \emph{restricted-faithfulness assumption} with respect to a DAG $G$ if it is Markov to $G$ and the following two conditions hold:
	\vspace{-0.2cm}
	\begin{enumerate}
		\item[(i)] \emph{adjacency-faithfulness}: for all $(j, k) \in E$ and all subsets $S \subset [p]\setminus\{j,k\}$ it holds that $X_j \notindependent X_k \mid X_S$;
\item[(ii)] \emph{orientation-faithfulness}: for all triples $(j, k, \ell)$ with skeleton $j-\ell-k$ and all subsets $S\subset [p]\setminus \{j,k\}$ such that $j$ is d-connected to $k$ given $S$ it holds that
$X_j \notindependent X_k \mid X_S.$
	\end{enumerate}
\end{definition}

A classic result~\citep{VermaPeal90} states that a Markov equivalence class $\mathcal{M}(G)$ is uniquely defined by the \emph{skeleton} (i.e.,~the undirected edges of $G$) and the \emph{v-structures} (i.e., induced subgraphs of the form $j\to k\leftarrow \ell$). The adjacency faithfulness assumption ensures recovery of the skeleton, while orientation faithfulness guarantees the correct orientation of edges in v-structures. In this paper, we provide an algorithm showing that it is possible to identify $\mathcal{M}(G^*)$ under strictly weaker conditions than restricted-faithfulness.

In Section~\ref{SecMain}, we present our \emph{Sparsest Permutation} (\emph{SP}) algorithm and its consistency guarantees. In Section~\ref{SecNoisy}, we consider the effects of hypothesis testing and provide uniform consistency guarantees for the SP algorithm. While the SP algorithm is non-parametric and can be applied for any distribution in which CI tests can be constructed, in Section~\ref{SecGaussian}, we consider the noiseless Gaussian setting. We use a result due to~\citet{Pourahmadi99} to prove that in this setting the SP algorithm boils down to finding the sparsest Cholesky factorization of the inverse covariance matrix and is equivalent to the $\ell_0$-penalized maximum likelihood approach described by~\citet{geerpb12}. Finally, Section~\ref{SecSimulations} provides a simulation study showing that the SP algorithm compares favorably in terms of accuracy for a given sample size to standard causal inference algorithms, in line with the proven stronger consistency guarantees. 

\section{Sparsest permutation (SP) algorithm}
\label{SecMain}

In this section, we introduce the SP algorithm and prove pointwise consistency of this algorithm under strictly weaker conditions than restricted-faithfulness. The SP algorithm is a hybrid method, which uses conditional independence relations to build a DAG for each ordering of the nodes and then uses the number of edges as score to rank the DAGs. More precisely, a permutation $\pi$ of the vertices $[p]$ is associated to a DAG $G_{\pi} = ([p], E_{\pi})$ through $\mathbb{P}$ as follows: 
\begin{equation*}
(\pi(j), \pi(k)) \in E_{\pi}\quad\Longleftrightarrow\quad j<k \textrm{ and } X_{\pi(j)} \notindependent X_{\pi(k)} \mid X_{\{\pi(1),\pi(2),..., \pi(k-1)\}\setminus\{\pi(j)\}} \textrm{ in } \mathbb{P}.  
\end{equation*}
\citet[Theorem 9 on p.~119]{Pearl88} and also~\citet{Verma86} showed that for any positive measure $\mathbb{P}$ and any permutation $\pi$, $\mathbb{P}$ is Markov to $G_\pi$ and satisfies the \emph{minimality assumption}, meaning that there is no proper sub-DAG 
of $G_\pi$ that satisfies the Markov property. A DAG $G_{\pi}$ is therefore also referred to as a \emph{minimal I-MAP} (independence map); see~\citet{Pearl88}. In this paper, we propose to select the permutation $\pi$ that yields the smallest number of edges, i.e.,~the \emph{sparsest permutation} (SP), also referred to as the \emph{minimal-edge I-MAP}~\citep{geerpb12}. Denoting by $|G|$ the number of edges in $G$, the SP algorithm is hence defined as follows:
\begin{enumerate}
	\item[(1)] For all permutations $\pi$ of the vertices $[p]$ construct $G_\pi$ and let $s_\pi := |G_\pi|$.
	\item[(2)] Output $G_{\pi^*}$ for all $\pi^*$ such that $s_{\pi^*}$ is minimal amongst all permutations. 
\end{enumerate}

This algorithm is an extension of the \emph{boundary-strata method}~\citep[p.~119]{Pearl88}, where the search is performed over all permutations. Permutation-based approaches for causal inference are not new~\citep{B92,SV93,TeyssierKoller}. However, none of these methods have consistency guarantees. Denoting by $\{G_{SP}\}$ the set of outputted graphs of the SP algorithm, the following result justifies choosing the sparsest permutation.

\begin{lemma}
	\label{LemSparsity}
	Let $\mathbb{P}$ be Markov to the DAG $G^*$ and let $S(G^*)$ denote its skeleton. Then as $n \rightarrow \infty\,$ it holds for all DAGs $\,G_{SP}$ that

(a) $|G_{SP}|\leq |G^*|$;

(b) if $G^*$ satisfies the adjacency-faithfulness assumption with respect to $\mathbb{P}$, then $S(G_{SP}) = S(G^*)$;

(c) if $G^*$ satisfies the restricted-faithfulness assumption with respect to $\mathbb{P}$, then $G_{SP} \in \mathcal{M}(G^*).$
\end{lemma}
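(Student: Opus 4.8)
The plan is to work throughout in the oracle (population) limit: as $n\to\infty$ the conditional independence relations inferred from the data converge to the true CI relations of $\mathbb{P}$, so ``Markov with respect to $X^{(1)},\dots,X^{(n)}$'' becomes ``Markov with respect to $\mathbb{P}$,'' and every statement below concerns population CI. I fix an arbitrary output $G_{SP}$; by Lemma~\ref{LemOrdering} it is Markov with respect to $\mathbb{P}$ and it has minimal edge count among all $G_\pi$. For (a) I would compare $G_{SP}$ against the DAG built from a topological ordering of the truth. Let $\tau$ be a permutation that topologically orders $G^*$ (every edge of $G^*$ points from earlier to later in $\tau$), and form $G_\tau$ by the construction rule. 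I claim $E_\tau\subseteq E^*$. Fix $j<k$ and set $S=\{\tau(1),\dots,\tau(k-1)\}\setminus\{\tau(j)\}$. If $\tau(j)\notin pa_{G^*}(\tau(k))$, then since $\tau$ is topological the whole set $\{\tau(1),\dots,\tau(k-1)\}$ consists of non-descendants of $\tau(k)$, so the local Markov condition (Definition~\ref{AssCMC}), together with the decomposition and weak-union properties of conditional independence, gives $X_{\tau(j)}\independent X_{\tau(k)}\mid X_S$; hence the edge $(\tau(j),\tau(k))$ is not placed in $G_\tau$. Thus every edge of $G_\tau$ is an edge of $G^*$, so $|G_\tau|\le|G^*|$, and minimality of $G_{SP}$ yields $|G_{SP}|\le|G_\tau|\le|G^*|$.

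For (b) I would show that adjacency faithfulness forces the skeleton of \emph{every} Markov DAG $G'$ to contain $S(G^*)$. Suppose $(u,v)\in E^*$ but $u,v$ are non-adjacent in $G'$. By acyclicity one of them is a non-descendant of the other in $G'$, say $v\in nd_{G'}(u)$; since they are non-adjacent, $v\in nd_{G'}(u)\setminus pa_{G'}(u)$, so the local Markov condition for $G'$ gives $X_u\independent X_v\mid X_{pa_{G'}(u)}$, contradicting adjacency faithfulness. Hence $S(G^*)\subseteq S(G')$ for every Markov DAG $G'$; applying this to $G_{SP}$ gives $|G^*|\le|G_{SP}|$, which combined with (a) forces $|G_{SP}|=|G^*|$ and therefore $S(G_{SP})=S(G^*)$.

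For (c) the skeletons already coincide by (b), so by the Verma--Pearl characterization it remains to match v-structures, and this is the delicate step. Fix an unshielded triple $(j,\ell,k)$ of the common skeleton. Since $j,k$ are non-adjacent in $G_{SP}$ there is a set $S'$ d-separating them in $G_{SP}$, so $X_j\independent X_k\mid X_{S'}$ by the Markov property; moreover the collider rule in $G_{SP}$ shows that $\ell\in S'$ precisely when $(j,\ell,k)$ is a non-collider in $G_{SP}$, using that \emph{every} d-separating set of the endpoints excludes a collider and includes every non-collider. Next I would apply the contrapositive of orientation faithfulness, valid because $j,k$ are the endpoints of the unshielded triple $(j,\ell,k)$: the independence $X_j\independent X_k\mid X_{S'}$ implies that $S'$ d-separates $j,k$ in $G^*$ as well, so the same collider rule, now applied in $G^*$, shows that $\ell\in S'$ precisely when $(j,\ell,k)$ is a non-collider in $G^*$. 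Comparing the two ``$\ell\in S'$'' equivalences shows that the triple is a collider in $G_{SP}$ if and only if it is a collider in $G^*$, so the v-structures agree and $G_{SP}\in\mathcal{M}(G^*)$. I expect this last part to be the main obstacle: one must handle both implications uniformly through the ``every separating set'' characterization of colliders, and must check that orientation faithfulness is exactly what is needed to transport the single population independence $X_j\independent X_k\mid X_{S'}$ into a genuine d-separation statement in $G^*$.
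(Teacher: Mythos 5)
Your proof is correct, and parts (a) and (b) are essentially the paper's argument: compare against $G_\tau$ for a topological ordering of $G^*$ (the paper obtains the prefix-set independence from d-separation plus the global Markov property, where you use the local Markov condition plus decomposition/weak union — interchangeable steps), then show $S(G^*)\subseteq S(G_\pi)$ under adjacency faithfulness and combine with (a); your containment step is marginally more general, since it applies to every DAG to which $\mathbb{P}$ is Markov rather than only to the constructed $G_\pi$. Part (c) is where you take a genuinely different route. The paper stays inside the permutation construction: writing $G_{SP}=G_{\pi^*}$ with $\pi^*(a)=i$, $\pi^*(b)=j$, $\pi^*(c)=k$, it uses the specific prefix conditioning set attached to the non-edge between $i$ and $j$, and orientation faithfulness forces $c>\max(a,b)$ when the triple is a v-structure of $G^*$ (and $c<\max(a,b)$ otherwise), which pins down the orientations in $G_{SP}$. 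You instead take an arbitrary set $S'$ d-separating the endpoints in $G_{SP}$ (such a set exists for any non-adjacent pair in a DAG), observe that any separating set contains the middle vertex of an unshielded triple exactly when that vertex is a non-collider, transport the independence $X_j\independent X_k\mid X_{S'}$ into a d-separation statement in $G^*$ via the contrapositive of orientation faithfulness, and read off collider status on both sides. Your route requires the global Markov property of $G_{SP}$ (justified because local and global Markov coincide for DAGs, as the paper notes after Definition~\ref{AssCMC}) and the standard existence of separating sets, but it never touches the ordering, so it in fact proves something slightly stronger: under restricted faithfulness, \emph{every} DAG that is Markov to $\mathbb{P}$ and has skeleton $S(G^*)$ lies in $\mathcal{M}(G^*)$. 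The paper's version is shorter given the SP machinery already in place; yours is more modular and makes the role of orientation faithfulness, as a converse-to-d-separation statement for endpoints of unshielded triples, more transparent.
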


\begin{proof}
	(a) \; Let $\pi^*$ denote a permutation that is consistent with the partial order defined by $G^*$. Let $E^*$ denote the edges in $G^*$ and let $G_{\pi^*}$ denote the DAG resulting from applying the SP algorithm to the ordering $\pi^*$. Note that two nodes $\pi^*(i)$ and $\pi^*(j)$ with $i<j$ and $(\pi^*(i), \pi^*(j))\notin E^*$ are d-separated in $G^*$ by the set $S=\{\pi^*(1),\dots ,\pi^*(j-1)\}\setminus\{\pi^*(i)\}$. Then, as a consequence of the Markov property, the distribution $\mathbb{P}$ satisfies $X_{\pi^*(i)}\independent X_{\pi^*(j)} \mid X_S$ and hence, $(\pi^*(i), \pi^*(j))\notin E_{\pi^*}$. This shows that $|G_{\pi^*}| \leq |G^*|$ and together with $|G_{SP}|\leq|G_{\pi^*}|$ completes the proof.
	
	(b) \; As a consequence of (a), to show equality of the skeleta it suffices to prove $S(G^*)\subset S(G_{\pi})$ for all permutations $\pi$. Under adjacency-faithfulness, $X_j \notindependent X_k \mid X_S$ for all edges $(X_j, X_k) \in E^*$ and for all subsets $S \subset [p] \setminus\{j,k\}$. Then by the definition of $G_\pi$ any edge in $G^*$ must be an edge in $G_\pi$ and hence $S(G^*) \subset S(G_\pi)$ for all permutations $\pi$.
	
	(c) \; Since restricted-faithfulness entails adjacency-faithfulness, $S(G_{SP}) = S(G^*)$. 
	Let $(j,k,\ell)$ be a triple with skeleton $j-\ell-k$ in $G^*$. We consider two cases, first the case when $(j,k,\ell)$ is a v-structure. Then $X_j\notindependent X_k \mid X_S$ for all $S\subset [p]\setminus\{j,k\}$ such that $\ell\in S$. Let $\pi_{SP}$ be a permutation giving rise to $G_{SP}$ and let $a,b,c\in [p]$ such that $\pi_{SP}(a)=j$, $\pi_{SP}(b)=k$, $\pi_{SP}(c)=\ell$. Note that $(\pi_{SP}(a), \pi_{SP}(b))\notin E^*$ and hence also not an edge in $G_{SP}$. As a consequence $X_{\pi_{SP}(a)}\independent X_{\pi_{SP}(b)} \mid X_S$ where $S=\{\pi_{SP}(1),\dots , \pi_{SP}(\max(a,b)-1)\}\setminus\{\pi_{SP}(\min(a,b))\}$ and therefore  $c>\max(a,b)$. Hence $(j,k,\ell)$ is also a v-structure in $G_{SP}$. The case where $(j,k,\ell)$ is not a v-structure is analogous. 
\end{proof}

We now analyze consistency guarantees of the SP algorithm. For this we introduce the following assumption. 

\begin{definition}
	\label{AssSMR}
	A pair $(G^*,\mathbb{P})$ satisfies the \emph{sparsest Markov representation} (\emph{SMR}) \emph{assumption} if $(G^*,\mathbb{P})$ satisfies the Markov property and $|G| > |G^*|$ for every DAG $G$ such that $(G, \mathbb{P})$ satisfies the Markov property and $G \notin \mathcal{M}(G^*)$.
\end{definition}
The SMR assumption asserts that the true DAG $G^*$ is the (unique up to Markov equivalence) sparsest DAG satisfying the Markov property. In the absence of additional information, the SMR assumption is a necessary condition for any algorithm that uses the CI relations of $\mathbb{P}$ to infer $G^*$, since if there is a DAG $G \notin \mathcal{M}(G^*)$ which satisfies the Markov property and is as sparse or sparser than $G^*$, there is no reason to select $G^*$ over $G$. Now we present our first main result showing that the SMR assumption is necessary and sufficient for consistency of the SP algorithm.

\begin{theorem}
	\label{ThmOracleMain}
	The SP algorithm outputs $G_{SP}\in\mathcal{M}(G^*)$ if and only if the pair $(G^*,\mathbb{P})$ satisfies the SMR assumption.
\end{theorem}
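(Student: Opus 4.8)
The plan is to reduce both directions to a single structural fact and then read off the conclusion from the SMR assumption. First I would establish the key monotonicity property: if a DAG $G$ satisfies the Markov assumption with respect to $\mathbb{P}$ and $\pi$ is any topological ordering consistent with $G$, then $G_\pi$ is a subgraph of $G$, so that $s_\pi = |G_\pi| \le |G|$. This is immediate from the definition of $G_\pi$ together with the local Markov condition: for $j<k$ with $(\pi(j),\pi(k))\notin E(G)$ the predecessor set $\{\pi(1),\dots,\pi(k-1)\}$ consists of non-descendants of $\pi(k)$ and contains $pa_G(\pi(k))$, so the Markov condition combined with the weak-union property of conditional independence (valid because $\mathbb{P}$ has positive measure) yields $X_{\pi(j)}\independent X_{\pi(k)}\mid X_{\{\pi(1),\dots,\pi(k-1)\}\setminus\{\pi(j)\}}$, i.e.\ $(\pi(j),\pi(k))\notin E_\pi$. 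From this, together with Lemma~\ref{LemOrdering} (every $G_\pi$ is Markov), I would deduce $m:=\min_\pi s_\pi = \min\{\,|G| : (G,\mathbb{P})\text{ is Markov}\,\}$, since any Markov $G$ admits a topological order $\pi$ with $s_\pi\le|G|$ and conversely each $G_\pi$ is itself Markov. I would also record the elementary consequence of the Verma--Pearl characterization that every $G\in\mathcal{M}(G^*)$ has the same skeleton $S(G^*)$, and hence exactly $|G^*|$ edges.

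For the \emph{if} direction, assume SMR. A Markov DAG strictly sparser than $G^*$ would, by SMR, have to lie in $\mathcal{M}(G^*)$ and therefore have $|G^*|$ edges, a contradiction; hence $G^*$ attains the minimum and $m=|G^*|$. Each outputted graph $G_{SP}$ is Markov with $|G_{SP}|=m=|G^*|$, so if $G_{SP}\notin\mathcal{M}(G^*)$ then SMR would force $|G_{SP}|>|G^*|$, a contradiction; thus every output lies in $\mathcal{M}(G^*)$. Conversely, any $G\in\mathcal{M}(G^*)$ has $|G|=m$, and the subgraph property applied to a topological order $\pi$ of $G$ forces $G_\pi=G$, so $G$ appears in the output. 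Hence the output equals $\mathcal{M}(G^*)$ and the algorithm succeeds.

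For the \emph{only if} direction I would argue the contrapositive. Suppose SMR fails, so there is a Markov $G\notin\mathcal{M}(G^*)$ with $|G|\le|G^*|$; note $m\le|G^*|$ always holds via any topological order of $G^*$. If $m<|G^*|$, every output has $m<|G^*|$ edges and so cannot lie in $\mathcal{M}(G^*)$, a failure. If $m=|G^*|$, then a topological order $\pi$ of $G$ gives $m\le|G_\pi|\le|G|\le|G^*|=m$, forcing $G_\pi=G$; thus the output contains $G\notin\mathcal{M}(G^*)$, again a failure. Either way the algorithm does not recover $\mathcal{M}(G^*)$, proving the contrapositive.

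The hard part will be the borderline case $m=|G^*|$ with a competing equally sparse Markov DAG $G\notin\mathcal{M}(G^*)$: here it is not enough to compare edge counts, and I must use the sharp subgraph identity $G_\pi=G$ to show that such a $G$ genuinely appears among the outputs. Care is also needed in fixing the meaning of ``outputs $\mathcal{M}(G^*)$'' --- namely that every outputted DAG is Markov equivalent to $G^*$ --- and in invoking Verma--Pearl so that Markov equivalence pins down the number of edges, which is what lets edge-count comparisons detect membership in $\mathcal{M}(G^*)$.
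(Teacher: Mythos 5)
Your proof is correct and takes essentially the same route as the paper's: the forward direction combines the Markovness of every $G_\pi$ (Lemma~\ref{LemOrdering}) with the edge bound $|G_{SP}|\le |G^*|$ (Lemma~\ref{LemSparsity}(a)), and the converse exhibits a competing sparse Markov DAG in the output by running the algorithm on one of its topological orderings. Your explicit monotonicity property ($G_\pi\subseteq G$ for any topological ordering $\pi$ of a Markov DAG $G$) together with the edge-count squeeze forcing $G_\pi=G$ simply makes rigorous the step the paper treats tersely (``since $G_\pi$ satisfies the minimality assumption, $S(G_\pi)=S(G)$''), and as a bonus shows the output is exactly $\mathcal{M}(G^*)$ rather than merely contained in it.
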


\begin{proof}
	First, let $(G^*,\mathbb{P})$ satisfy the SMR assumption and assume that $G_{SP}\notin\mathcal{M}(G^*)$. By Lemma \ref{LemSparsity}, $|G_{SP}| \leq |G^*|$,  which is a contradiction to the SMR assumption, since $G_{SP}$ satisfies the Markov property. Next, assume that $(G^*,\mathbb{P})$ does not satisfy the SMR assumption. Then there exists a DAG $G \notin \mathcal{M}(G^*)$ with $|G| \leq |G^*|$ that satisfies the Markov property. Without loss of generality, we choose $G$ with minimal number of edges. Let $\pi$ denote a permutation that is consistent with the partial order defined by the DAG $G$. Since $G_\pi$ satisfies the minimality assumption, $S(G_\pi) = S(G)$. If $|G| < |G^*|$, the SP algorithm will output $G\notin\mathcal{M}(G)$. If $|G| = |G^*|$, the SP algorithm will choose both $G$ and $G^*$. Hence the set $\{G_{SP}\}$ will include DAGs that are not in $\mathcal{M}(G^*)$.
\end{proof}

In the following result, we show that the SMR assumption is strictly weaker than the restricted-faithfulness assumption,  the weakest known sufficient condition for consistency of causal inference algorithms. 
\begin{theorem}
	\label{ThmMainPCSP}
	Let $(G^*,\mathbb{P})$ satisfy the Markov property. Then the restricted-faithfulness assumption implies the SMR assumption and there exist pairs $(G^*, \mathbb{P})$ that satisfy the SMR assumption but not the restricted-faithfulness assumption.
\end{theorem}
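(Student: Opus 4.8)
The plan is to treat the two parts separately, using Lemma~\ref{LemSparsity} as the engine for part (a) and an explicit construction for part (b).

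For part (a), I would argue by contradiction. Suppose $(G^*,\mathbb{P})$ satisfies restricted faithfulness but violates SMR, so there is a DAG $G\notin\mathcal{M}(G^*)$ with $|G|\le|G^*|$ satisfying the Markov assumption. The first step is a reduction from an arbitrary Markov DAG to a permutation DAG of the type in Lemma~\ref{LemOrdering}: taking a topological ordering $\pi$ consistent with $G$, the ordered Markov property (a consequence of the Markov assumption and the graphoid axioms, valid since $\mathbb{P}$ has positive measure) shows that whenever $(\pi(j),\pi(k))\notin E_G$ the defining CI relation of $G_\pi$ holds, so $E_\pi\subseteq E_G$ and hence $|G_\pi|\le|G|\le|G^*|$. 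The second step invokes Lemma~\ref{LemSparsity}(c): under restricted faithfulness every sparsest permutation DAG $G_{SP}$ lies in $\mathcal{M}(G^*)$, and since Markov equivalent DAGs share a skeleton (Verma--Pearl~\cite{VermaPeal90}) this gives $|G_{SP}|=|G^*|$. Because $G_{SP}$ attains the global minimum edge count over all permutations while $|G_\pi|\le|G^*|=|G_{SP}|$, the permutation DAG $G_\pi$ is itself sparsest, so $G_\pi\in\mathcal{M}(G^*)$ and $|G_\pi|=|G^*|$. Chaining the inequalities then forces $|G|=|G^*|=|G_\pi|$, and since $E_\pi\subseteq E_G$ with equal cardinality we get $G=G_\pi\in\mathcal{M}(G^*)$, contradicting $G\notin\mathcal{M}(G^*)$.

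For part (b), I would exhibit an explicit pair $(G^*,\mathbb{P})$ where faithfulness fails but SMR persists. The natural setting is Gaussian, $\mathbb{P}=\mathcal{N}(0,\Sigma)$, on a small DAG whose skeleton contains a cycle, since such graphs admit the path cancellations responsible for unfaithful CI relations. I would fix a DAG $G^*$ together with edge weights so that some conditional (in)dependence predicted by d-separation is cancelled, thereby breaking adjacency- or orientation-faithfulness, and then verify directly that no DAG outside $\mathcal{M}(G^*)$ is Markov with as few or fewer edges. Concretely, because the number of orderings is finite, I would enumerate the minimal Markov DAGs $G_\pi$ over all permutations $\pi$, read off their edge counts from the CI relations induced by the chosen $\Sigma$, and confirm that the minimum is attained uniquely on $\mathcal{M}(G^*)$.

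The main obstacle is part (b): the construction must simultaneously create a genuine extra (unfaithful) CI relation while ensuring this relation does not open up a strictly sparser or equally sparse Markov DAG in a different equivalence class. I expect the cleanest route is to take the smallest graph for which restricted faithfulness is known to be violable, guided by the analysis of unfaithful Gaussian distributions in Uhler et al.~\cite{UhlerRasBuYu}, and to certify sparsity-minimality by the finite permutation enumeration described above.
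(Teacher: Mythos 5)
Part (a) of your proposal is correct, and it is essentially the paper's argument with one step inlined: the paper proves (a) by citing Lemma~\ref{LemSparsity}(c) (restricted faithfulness $\Rightarrow$ $G_{SP}\in\mathcal{M}(G^*)$) and then invoking Theorem~\ref{ThmOracleMain} to convert SP-consistency into the SMR assumption, whereas you re-derive that second step directly via the reduction $E_\pi\subseteq E_G$ for a topological ordering $\pi$ of a hypothetical violating DAG $G$, and then close the loop with the cardinality argument $E_\pi\subseteq E_G$, $|E_\pi|=|E_G|$ $\Rightarrow$ $G=G_\pi\in\mathcal{M}(G^*)$. That chain is valid (one quibble: the ordered Markov property you use needs only the semi-graphoid axioms, not positivity of $\mathbb{P}$, but the over-qualification is harmless).

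The genuine gap is part (b). The claim is an existence statement, so a proof must exhibit a concrete pair $(G^*,\mathbb{P})$ and verify both properties; your proposal only describes a search strategy ("fix a DAG and weights so that some CI relation cancels, then enumerate permutations") and explicitly defers the construction, acknowledging it as "the main obstacle." Nothing is actually constructed or checked: no DAG, no CI relations or covariance matrix, no enumeration. The paper carries this out: $G^*$ is the 4-cycle with $E^*=\{(1,2),(1,4),(2,3),(3,4)\}$, and $\mathbb{P}$ satisfies the d-separation relations $X_1\independent X_3\mid X_2$ and $X_2\independent X_4\mid X_1,X_3$ together with the unfaithful relation $X_1\independent X_2\mid X_4$, realizable in a Gaussian by cancellation of the paths $X_1\to X_4\leftarrow X_3\leftarrow X_2$ and $X_1\to X_2$; one then checks all $24$ permutations (e.g., the ordering $(1,4,2,3)$ drops the edge $(1,2)$ but produces a DAG with $5$ edges) to conclude that every permutation DAG outside $\mathcal{M}(G^*)$ has more than $4$ edges, which by Theorem~\ref{ThmOracleMain} (or your own part-(a) reduction) yields SMR. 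Your instincts point exactly at this example -- Gaussian, smallest cyclic skeleton, guidance from~\cite{UhlerRasBuYu} -- but as written the witness and its certification are missing, so the proposal does not prove (b).
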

\begin{proof}
	We proved in Lemma~\ref{LemSparsity} (c) that restricted-faithfulness is sufficient for consistency of the SP algorithm. Applying Theorem~\ref{ThmOracleMain} we hence find that restricted-faithfulness implies the SMR assumption. We next construct an example of a $4$-node DAG that satisfies the SMR assumption but not restricted-faithfulness. Let $G^* = (\{1,2,3,4\}, E^*)$ be the 4-cycle with $E^* = \{(1,2),(1,4), (2,3), (3,4)\}$ and let $\mathbb{P}$ satisfy the CI relations $X_1\independent X_3 \mid X_2$,\; $X_2\independent X_4 \mid X_1, X_3$ and $X_1\independent X_2 \mid X_4$. The last CI relation violates adjacency-faithfulness. Such a violation is straightforward to construct: It corresponds to cancellation of the two paths between $X_1$ and $X_2$ conditioned on $X_4$, namely $X_1 \rightarrow X_4 \leftarrow X_3 \leftarrow X_2$ and $X_1 \rightarrow X_2$. On the other hand, using the SP algorithm it can be shown that any permutation other than the true permutation $(1,2,3,4)$ produces a DAG $G_\pi \notin \mathcal{M}(G^*)$ with more than $4$ edges. For example, consider the permutation $(1,4,2,3)$, i.e.,$\pi(1) = 1$, $\pi(2) = 4$, $\pi(3) =2$, $\pi(4) = 3$. In this case only the edge $(1,2)$ is omitted leading to a DAG with 5 edges. Similarly, one can check all other permutations, proving that $(G^*,\mathbb{P})$ satisfies the SMR assumption.
\end{proof}

We end this section by comparing the SMR assumption to two well-studied minimality assumptions, which we refer to as \emph{SGS-minimality}~\citep{spirtes:00} and \emph{P-minimality}~\citep{pearl00}. Both minimality assumptions encourage the selection of DAG models with fewer edges. 
SGS-minimality was encountered already in Section~\ref{SecMain} and asserts that there exists no proper sub-DAG of $G^*$ that satisfies the Markov property with respect to $\mathbb{P}$. Since there exists an SGS-minimal DAG for every ordering, SGS-minimality is a significantly weaker condition than the SMR assumption. 

P-minimality with respect to a distribution $\mathbb{P}$ asserts that for DAGs satisfying the Markov property, models that entail more CI statements are preferred. Stated precisely, let $G$ and $G'$ be two DAGs that satisfy the Markov property with respect to $\mathbb{P}$ and let $D_{sep}(G)$ denote the d-separation statements for $G$ and $D_{sep}(G')$ the d-separation statements for $G'$. Then $G'$ is \emph{strictly preferred} to $G$ if $D_{sep}(G) \subsetneq D_{sep}(G')$. In other words, the DAG $G'$ entails a strict super-set of CI statements compared to the DAG $G$. The P-minimality assumption asserts that no DAG that satisfies the Markov property with respect to $\mathbb{P}$ is strictly preferred to the true DAG $G^*$. \citet{Zhang13} proved that P-minimality implies SGS-minimality. We now prove that the SMR assumption is strictly stronger than P-minimality.

\begin{theorem}
	\label{ThmPMin}
	\begin{enumerate}
		\item[(a)] If a DAG $G$ satisfies the SMR assumption with respect to a distribution $\mathbb{P}$, it also satisfies the P-minimality assumption with respect to $\mathbb{P}$.
		\item[(b)] There exist DAG models $(G^*, \mathbb{P})$, for which all DAGs that satisfy the SMR assumption belong to $\mathcal{M}(G^*)$ and there are DAGs $G \notin \mathcal{M}(G^*)$ that satisfy the P-minimality assumption.
	\end{enumerate}
\end{theorem}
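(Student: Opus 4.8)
The plan is to treat (a) and (b) separately, with the engine for (a) being a monotonicity relation between d-separation statements and skeletons. First I would record the elementary observation that, for any DAG $G$, two nodes $j,k$ are adjacent in $S(G)$ precisely when no subset d-separates them; equivalently $(j,k)\notin S(G)$ iff $(j,k,S)\in D_{sep}(G)$ for some $S$. From this I would extract the key lemma: if $G_1$ and $G_2$ are any two DAGs with $D_{sep}(G_1)\subseteq D_{sep}(G_2)$, then $S(G_2)\subseteq S(G_1)$, and hence $|G_2|\le|G_1|$. Indeed, any edge $(j,k)\in S(G_2)$ is never d-separated in $G_2$, so it is never d-separated in $G_1$ (otherwise that separating statement would lie in $D_{sep}(G_1)\subseteq D_{sep}(G_2)$, contradicting $(j,k)\in S(G_2)$), forcing $(j,k)\in S(G_1)$.

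With this lemma in hand, (a) follows by contradiction. Suppose $(G^*,\mathbb{P})$ satisfies the SMR assumption but not P-minimality, so there is a Markovian DAG $G'$ strictly preferred to $G^*$, i.e.\ $D_{sep}(G^*)\subsetneq D_{sep}(G')$. The lemma gives $|G'|\le|G^*|$, while the strict inclusion of d-separation sets means $G'$ entails different CI statements than $G^*$ and hence $G'\notin\mathcal{M}(G^*)$. But then $G'$ is a Markovian DAG outside $\mathcal{M}(G^*)$ with at most as many edges as $G^*$, directly contradicting Definition~\ref{AssSMR}. Therefore no DAG is strictly preferred to $G^*$ and P-minimality holds.

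For (b) the plan is to exhibit an explicit small ($4$-node) model separating the two conditions. I would choose a sparse $G^*$ for which $(G^*,\mathbb{P})$ satisfies SMR, together with a strictly denser DAG $G\notin\mathcal{M}(G^*)$ whose skeleton is \emph{incomparable} to $S(G^*)$ --- in particular $S(G^*)\not\subseteq S(G)$, so that the contrapositive of the lemma above blocks $G^*$ from being strictly preferred to $G$. The distribution $\mathbb{P}$ must be Markov to both DAGs, so it must satisfy all statements in $D_{sep}(G^*)\cup D_{sep}(G)$; since the skeletons are incomparable this forces extra (unfaithful) independences relative to each graph, exactly the path-cancellation phenomenon already used in the proof of Theorem~\ref{ThmMainPCSP}. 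Concretely I would take $G^*$ to be a $4$-cycle and $G$ a denser DAG obtained by deleting one cycle-edge and adding the two diagonal adjacencies, then tune $\mathbb{P}$ (e.g.\ a Gaussian with canceling path weights) to realize precisely the intended CI list. SMR for $G^*$ is verified by the finite enumeration over permutations already licensed by Lemma~\ref{LemSparsity}, and P-minimality of $G$ is verified by using the monotonicity lemma to reduce ``no strictly preferred DAG'' to checking that none of the finitely many proper sub-skeletons of $G$ supports a Markov DAG containing $D_{sep}(G)$.

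The hard part will be the construction in (b): I must produce a single positive-measure $\mathbb{P}$ whose conditional-independence structure contains both $D_{sep}(G^*)$ and $D_{sep}(G)$ and \emph{no} further independences that would either create an even sparser Markov DAG (breaking SMR for $G^*$) or create a DAG strictly preferred to $G$ (breaking P-minimality of $G$). Guaranteeing that such an unfaithful $\mathbb{P}$ exists, rather than being ruled out by the algebraic constraints among partial correlations, is the delicate step; in the Gaussian case this amounts to solving for edge weights that induce the desired cancellations while leaving all other partial correlations nonzero, after which the finite P-minimality condition is a routine but bookkeeping-heavy verification.
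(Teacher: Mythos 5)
Your argument for (a) is correct but organized differently from the paper's, in a way that is arguably cleaner. The paper also argues by contradiction with a strictly preferred Markovian $G'$, but it first invokes SMR to conclude $|G'|\geq|G|$ and then splits into cases: identical skeletons (where it derives a contradiction from a differing v-structure) and non-identical skeletons (where it uses the adjacency/d-separation duality exactly as in your monotonicity lemma). Your version inverts the logic: the lemma that $D_{sep}(G_1)\subseteq D_{sep}(G_2)$ forces $S(G_2)\subseteq S(G_1)$ yields $|G'|\leq|G^*|$ directly, which together with $G'\notin\mathcal{M}(G^*)$ (forced by the strict inclusion of d-separation sets) contradicts SMR in one step and dispenses with the v-structure case entirely. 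For (b), your plan is essentially the paper's proof: the paper reuses the 4-cycle $G^*$ with the path-cancellation relation $X_1\independent X_2\mid X_4$ from Theorem~\ref{ThmMainPCSP}(b) (so SMR is already established there), and exhibits exactly the DAG you describe---the cycle minus the edge $(1,2)$ plus both diagonals, with edge set $\{(1,4),(1,3),(4,2),(4,3),(2,3)\}$---as Markov to $\mathbb{P}$, P-minimal, and outside $\mathcal{M}(G^*)$. The paper, like you, leaves the finite verifications (realizability of $\mathbb{P}$, the enumeration over permutations for SMR, P-minimality of the dense DAG) as assertions, so your level of detail matches the original. One caution on your verification scheme for P-minimality: your monotonicity lemma only gives $S(G'')\subseteq S(\tilde{G})$ for a DAG $G''$ strictly preferred to $\tilde{G}$, so restricting attention to \emph{proper} sub-skeletons overlooks candidates whose skeleton equals $S(\tilde{G})$ but whose orientations differ; ruling those out requires precisely the v-structure argument (identical skeletons with strictly nested d-separation sets are impossible) that your streamlined proof of (a) avoided, so it would need to be added back at this point.
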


\begin{proof}
	(a) \; Let $G$ be a DAG that satisfies the SMR assumption with respect to $\mathbb{P}$ and for contradiction assume $G$ does not satisfy P-minimality. Then there exists a DAG $G'$ that is Markov to $\mathbb{P}$ such that $D_{sep}(G) \subsetneq D_{sep}(G')$. Hence $G'\neq G$ and as a consequence of the SMR assumption, $|G'|\geq |G|$. We first consider the case where the skeleton of $G$ and $G'$ are identical.  So without loss of generality, there exists a v-structure $i\to k,$ $j\to k$ in $G'$ that is not in $G$. Hence there exists a subset $S$ with $k\in S$ that d-separates $i$ from $j$ in $G$ but does not d-separate $i$ from $j$ in $G'$. This is in contradiction with $D_{sep}(G) \subsetneq D_{sep}(G')$. As a consequence, the skeleta of $G$ and $G'$ are not identical. Since $|G'|\geq |G|$ this means that there exist vertices $i, j$ that are connected by an edge in $G'$ but not in $G$. Hence there exists a subset $S$ that d-separates $i$ from $j$ in $G$ but not in $G'$,  again contradicting $D_{sep}(G) \subsetneq D_{sep}(G')$. 
	
	(b) \; We consider the same example as in the proof of Theorem~\ref{ThmMainPCSP} (b). Let $G^* = (\{1,2,3,4\}, E^*)$ be the 4-cycle with $E^* = \{(1,2),(1,4), (2,3), (3,4)\}$ and let $\mathbb{P}$ satisfy the CI relations $X_1\independent X_3 \mid X_2$,\; $X_2\independent X_4 \mid X_1, X_3$ and $X_1\independent X_2 \mid X_4$. All DAGs that satisfy the SMR assumption belong to $\mathcal{M}(G^*)$. However, the DAG with edge set $\tilde{E} = \{(1,4),(1,3), (4,2), (4,3), (2,3)\}$ is Markov to $\mathbb{P}$, satisfies the P-minimality assumption, but is not in $\mathcal{M}(G^*)$.
\end{proof}

Theorem~\ref{ThmPMin} (b) proves that the converse of (a) is false and shows that while P-minimality tends to encourage sparser models, it is less powerful than the SMR assumption for determining the true Markov equivalence class of $G^*$.

\section{Effects of inferring CI relations from data}
\label{SecNoisy}

The CI relations used by the SP algorithm need to be estimated from data. To infer the CI relations, we apply the standard hypothesis testing framework as it was used also in earlier constraint-based methods (see, e.g.,~\citet{RobScheiSpirtWas,ZhangSpirtes03}). For brevity, we only present the formal testing framework when $\mathbb{P}$ is a Gaussian distribution, although the results in this section apply for any distribution in which suitable CI tests can be constructed. 

In the Gaussian setting, a CI relation $X_j \independent X_k \mid X_S$ is equivalent to zero partial correlation $\rho_{j,k | S} := \mbox{corr}(X_j, X_k\mid X_S)$. Hypothesis testing then boils down to
\begin{eqnarray*}
	\mathcal{H}_0 : \quad \rho_{j,k | S} & = & 0 \qquad\textrm{(i.e., $X_j \independent X_k \mid X_S$)},\\
	\mathcal{H}_1 : \quad \rho_{j,k | S} & \neq & 0 \qquad\textrm{(i.e., $X_j \notindependent X_k \mid X_S$)},
\end{eqnarray*}
for which a z-test based on Fisher's z-transform~\citep{Fisher15} can be built as follows: From the sample covariance matrix $\widehat{\Sigma}$ compute the sample correlation coefficients $\hat{\rho}_{j,k | S} = [\widehat{\Sigma} ]_{jk,jk} - [\widehat{\Sigma} ]_{jk,S} ([\widehat{\Sigma} ]_{S,S})^{-1} [\widehat{\Sigma} ]_{S, jk}$ via Schur complement. Next, compute Fisher's z-transform 
$$\hat{z}_{jk | S} = \frac{1}{2} \log \biggr (  \frac{1+\hat{\rho}_{j,k | S}}{1-\hat{\rho}_{j,k | S}}\biggr).$$ A test of size $\alpha$ is obtained using the test statistic $T_n = \sqrt{n - |S| - 3 |\hat{z}_{jk | S}|}$ with confidence interval $R_n  = (-\Phi^{-1}(1 -\alpha/2), \Phi^{-1}(1 -\alpha/2))$, where $\Phi$ denotes the cumulative distribution function of $\mathcal{N}(0,1)$~\citep{Fisher15}.

In the following, we provide guarantees for \emph{uniform consistency} of the SP algorithm, i.e.~when the conditional independence relations need to be inferred from data as $n\to\infty$. For a precise definition see~\citep[Theorem 2]{ZhangSpirtes03}. \citet{RobScheiSpirtWas} proved that the faithfulness assumption is not sufficient to guarantee uniform consistency of the SGS and PC algorithms. Even if the true distribution $\mathbb{P}$ is faithful to $G^*$, the empirical distribution $\mathbb{P}_n$ might not satisfy the faithfulness assumption and might lead to failure of the PC algorithm. To overcome this problem, \citet{ZhangSpirtes03} introduced the \emph{strong-faithfulness assumption} and proved that it ensures uniform consistency of the PC and SGS algorithms. To simplify notation, we define $\Omega_{\lambda}(\mathbb{P}) = \{(j,k,S)\; | \; |\textrm{corr}(j,k\mid S)| \leq \lambda \}$.

\begin{definition} \citep{ZhangSpirtes03}
	Given $\lambda\in (0,1)$, a Gaussian distribution $\mathbb{P}$ satisfies the \emph{$\lambda$-strong-faithfulness assumption} with respect to a DAG $G$ if for any $j,k\in [p]$ and $S\subset [p]\setminus\{j,k\}$,
	$$ j \textrm{ is d-separated from } k \textrm{ given }S  \quad\Longleftrightarrow \quad (j,k, S) \in \Omega_{\lambda}(\mathbb{P}). $$
\end{definition}

Along the lines of the strong-faithfulness assumption for constraint-based methods, we introduce the \emph{strong-SMR assumption}. 
We define this assumption for Gaussian distributions, although it can be extended to other distributions by replacing partial correlation tests with mutual information tests. 
We point out that the faithfulness assumption, the Markov property, and the SMR assumption, which are defined for distributions $\mathbb{P}$, naturally extend to sets of CI relations. In particular, we will be discussing these assumptions with respect to $\Omega_{\lambda}(\mathbb{P})$. 

\begin{definition}
	\label{AssSMR}
	Given $\lambda >0$, a Gaussian distribution $\mathbb{P}$ satisfies the \emph{$\lambda$-strong sparsest Markov representation} (\emph{SMR}) \emph{assumption} with respect to a DAG $G^*$ if $G^*$ satisfies the Markov property with respect to the CI relations in $\Omega_{\lambda}(\mathbb{P})$ and $|G| > |G^*|$ for every DAG $G$ such that $G$ satisfies the Markov property with respect to $\Omega_{\lambda}(\mathbb{P})$ and $G \notin \mathcal{M}(G^*)$.
\end{definition}
The $\lambda$-strong SMR assumption asserts that $G^*$ satisfies the SMR assumption with respect to the CI relations in $\Omega_{\lambda}(\mathbb{P})$. Since strong-faithfulness asserts that $G^*$ is faithful to the CI relations in $\Omega_{\lambda}(\mathbb{P})$, it follows from Theorem \ref{ThmMainPCSP} that the $\lambda$-strong SMR assumption is strictly weaker than the $\lambda$-strong faithfulness assumption. 
Using the framework developed by \citet{RobScheiSpirtWas} and used by \citet{ZhangSpirtes03}, we now prove uniform consistency of the SP algorithm.

\begin{theorem}
	\label{LemConsistency}
	Under the $\lambda$-strong SMR assumption there exists a sequence of hypothesis tests 
	such that the SP algorithm is uniformly consistent.
\end{theorem}

\begin{proof}
	Let $\hat{\rho}_{jk\mid S}$ for $S\subset [p]\setminus\{j,k\}$ denote the sample partial correlation corresponding to $\rho_{jk\mid S}$. By Chebyshev's Inequality,
	$$\mathbb{P}\left( |\hat{\rho}_{jk\mid S} - \rho_{jk\mid S}|\geq \epsilon\right) \leq \frac{\var(\hat{\rho}_{jk\mid S})}{\epsilon^2}.$$
	The asymptotic distribution of the z-transform was computed by~\citet{Fisher15} to be
	$$\sqrt{n-|S|-3}\left(\hat{z}_{jk\mid S} - z_{jk\mid S}   \right) \xrightarrow{d}  \mathcal{N}(0,1).$$
	Since $\hat{\rho}_{jk\mid S} = \tanh(\hat{z}_{jk\mid S})$, then using the delta method we obtain the asymptotic distribution of $\hat{\rho}_{jk\mid S}$:
	$$\sqrt{n-|S|-3}\left( |\hat{\rho}_{jk\mid S} - \rho_{jk\mid S}\right) \xrightarrow{d} \mathcal{N}\left(0,\left(1-\rho_{jk\mid S}^2\right)^2\right).$$
	Hence, as a consequence, asymptotically, 
	$$\mathbb{P}\left( |\hat{\rho}_{jk\mid S} - \rho_{jk\mid S}|\geq \epsilon\right) \leq \frac{1}{(n-|S|-3)\,\epsilon^2}.$$
	More general distributional results on sample partial correlations can be found in \citep{Drton_Perlman}. So, for $n$ sufficiently large it holds with high probability that $|\hat{\rho}_{jk\mid S} - \rho_{jk\mid S}|\leq \epsilon$. Using the sample partial correlations $\hat{\rho}_{jk\mid S}$ for hypothesis testing, we can build a sequence of correlation hypothesis tests 
	such that the inferred CI relations equal the CI relations in $\Omega_{\lambda}(\mathbb{P})$ with probability approaching $1$. By applying the $\lambda$-strong SMR assumption, the SP algorithm recovers $\mathcal{M}(G^*)$ with probability approaching $1$. Hence the SP algorithm is uniformly consistent.
\end{proof}

We end this section by analyzing the consequences of type-I and type-II error rates of the CI tests for the SP algorithm. Note that a \mbox{type-I} error corresponds to missing a CI relation, whereas a type-II error corresponds to adding a CI relation. In order to contrast the sample setting from the oracle setting considered in~Section~\ref{SecMain}, we denote the estimated DAG by $\hat{G}$. The following theorem shows that if the type-II errors outweigh the type-I errors, 
then the SP algorithm performs at least as well as the SGS algorithm in terms of recovering the skeleton of $G^*$.

\begin{theorem}
	\label{ThmErrors}
	If the SGS algorithm recovers the skeleton $S(G^*)$ based on the inferred CI relations and there exists a permutation $\pi$ such that $|S(\hat{G}_\pi)| \leq |S(G^*)|$, then $S(\hat{G}_{SP}) = S(G^*)$.
\end{theorem}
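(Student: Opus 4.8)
The plan is to exploit the asymmetry between the two edge‑inclusion rules as applied to the \emph{same} inferred CI relations: for a fixed permutation the SP rule tests only one conditioning set per potential edge, whereas SGS deletes an edge as soon as \emph{some} conditioning set yields an inferred CI relation. First I would record the containment this forces: for every permutation $\pi$, the SGS skeleton is a subset of $S(\hat G_\pi)$. Indeed, if an edge $(j,k)$ survives the SGS deletion step then there is no subset $S$ for which the test accepts $X_j \independent X_k \mid X_S$; in particular this holds for the single set $S = \{\pi(1), \pi(2),\dots,\pi(\max(\pi^{-1}(j), \pi^{-1}(k) ))\}\setminus\{j,k\}$ used by the SP construction, so $X_j \notindependent X_k \mid X_S$ under the inferred relations and the edge also appears in $S(\hat G_\pi)$. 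Since both algorithms are driven by the identical pool of inferred CI relations, this inclusion is immediate and permutation‑independent.

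Next I would feed in the two hypotheses. By assumption SGS recovers the true skeleton, so its output equals $S(G^*)$ and the containment above reads $S(G^*) \subseteq S(\hat G_\pi)$ for \emph{every} $\pi$; in particular $|S(G^*)| \le |S(\hat G_\pi)|$ for all $\pi$. The second hypothesis furnishes a permutation with $|S(\hat G_\pi)| \le |S(G^*)|$, so for that permutation the two cardinalities agree and the minimum of $|S(\hat G_\pi)|$ over all permutations equals $|S(G^*)|$. Because the SP algorithm returns a permutation $\pi^*$ attaining this minimum, $|S(\hat G_{SP})| = |S(G^*)|$.

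Finally I would promote this cardinality equality to set equality. For the SP‑optimal permutation $\pi^*$ the containment $S(G^*) \subseteq S(\hat G_{\pi^*})$ still holds, and the two skeletons have just been shown to have equal size; a finite set contained in another of the same cardinality must coincide, so $S(\hat G_{\pi^*}) = S(G^*)$. Since both the containment and the cardinality bound are independent of which minimizer is chosen, this applies to every permutation attaining the minimum, and hence $S(\hat G_{SP}) = S(G^*)$ regardless of the tie‑breaking among optimal permutations.

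The only delicate point I anticipate is the containment step, namely checking that the lone CI test invoked by the SP rule is dominated by the exhaustive deletion test of SGS on the same inferred relations; once that is secured the remainder is elementary counting. I would also be careful that the comparison is made on inferred CI relations in the sample setting rather than on oracle relations, so that the clause ``based on the inferred CI relations'' in the hypothesis is used consistently for both algorithms.
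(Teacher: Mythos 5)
Your proof is correct and follows essentially the same route as the paper: the key containment $S(\hat{G}_{SGS})\subseteq S(\hat{G}_\pi)$ for every permutation $\pi$ (which the paper obtains by citing Lemma~\ref{LemSparsity}, ``regardless of errors made when inferring the CI relations''), combined with the hypothesis $|S(\hat{G}_\pi)|\leq |S(G^*)|$ to force set equality at any SP-optimal permutation. The only difference is that you spell out explicitly the one-test-versus-all-tests domination argument and the cardinality bookkeeping that the paper compresses into a single sentence.
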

\begin{proof}
	Let $\hat{G}_{SGS}$ denote the output of the SGS algorithm. If the SGS algorithm recovers $S(G^*)$ then $S(\hat{G}_{SGS})=S(G^*)$. Since $S(\hat{G}_{SGS})\subset S(\hat{G}_{SP})$ (regardless of errors made when inferring the CI relations) and by assumption $|S(\hat{G}_\pi)| \leq |S(G^*)|$, it follows that $S(\hat{G}_{SP})=S(G^*)$.
\end{proof}
Consequently, if only type-II errors are made in inferring CI relations, then $|S(\hat{G}_{\pi})| \leq |S(G^*)|$ for some $\pi$ and the SP algorithm performs at least as well as the SGS algorithm in terms of recovering the skeleton. This is in general not the case in the presence of type-I errors, since it might happen that $|S(\hat{G}_\pi)| > |S(G^*)|$ for all $\pi$ while the SGS algorithm recovers $\mathcal{M}(G^*)$. The following $4$-node example illustrates this situation.

\begin{example}
	\label{ex_type1}
	Let $G^* = ([4], E^*)$ be a DAG with $E^* = \{(1, 2), (2, 3), (3, 4)\}$. Assume a type-I error occurs missing the CI relation $X_1 \independent X_4 \mid X_2, X_3$, while all other CI relations are inferred correctly. The SGS algorithm recovers the correct skeleton, since the CI relation $X_1 \independent X_4 \mid X_3$ leads to the removal of the edge $(1,4)$, although the CI relation $X_1 \independent X_4 \mid X_2, X_3$ is missing. On the other hand, the SP algorithm will include the edge $(1,4)$ for a permutation $\pi^*$ consistent with $G^*$ and result in $|\hat{G}_\pi| = 4 > |G^*|$. Furthermore, it can be shown that every permutation $\pi$ results in a DAG $\hat{G}_\pi$ such that $|\hat{G}_\pi| \geq 4$. So in this example, the SP algorithm fails to recover the correct skeleton. \hfill$\square$
\end{example}

Theorem~\ref{ThmErrors} and Example~\ref{ex_type1} show that the SGS algorithm is more robust to type-I errors in inferring the CI relations, while the SP algorithm is more robust to type-II errors, since by searching over all permutations it can exploit the global structure of $G^*$. This is also apparent in our simulations presented in~Section~\ref{SecSimulations}, where we analyze the frequency of errors made by adding or missing edges for various algorithms. 

\section{SP algorithm for Gaussian DAG models}
\label{SecGaussian}

In this section, we analyze the SP algorithm in the case when $\mathbb{P}$ is a Gaussian distribution. In this case, $\mathbb{P}$ is defined by the following linear structural equations:
\begin{equation}
\label{EqnStruct}
X_k= \sum_{j < k}{A_{jk} X_j} + \epsilon_k,\qquad j = 1,2,...,p,
\end{equation}
where $\noiseVect = (\epsilon_1,\epsilon_2, \ldots ,\epsilon_p) \sim \mathcal{N}(0, D)$, $D = \mbox{diag}(\sigma_1^2, \sigma_2^2,...,\sigma_p^2)$ and $A_{jk} \neq 0$ if and only if $(j,k) \in E$. We assume that $\sigma_j^2 > 0$ for all $j$ to ensure that $\mathbb{P}$ has positive measure everywhere. Without loss of generality, we assume that the vertices of the underlying DAG $G$ are \emph{topologically ordered}, meaning that $j<k$ for all $(j,k)\in E$. Then the structural equations can be expressed in matrix form as follows:
\begin{eqnarray*} 
	\label{EqnMat}
	(I - A)^T \XVect = \noiseVect,
\end{eqnarray*}
where $\XVect = (X_1, X_2, ..., X_p)$ and $A \in \mathbb{R}^{p
	\times p}$ is a strictly upper triangular matrix. From $\noiseVect \sim \mathcal{N}(0,D)$, we obtain that
\begin{eqnarray*}
	\XVect \sim \mathcal{N}(0, [(I-A) D (I-A)^T]^{-1}).
\end{eqnarray*} 
To simplify notation we define $U:=I-A$ and the inverse covariance matrix $K:=UDU^T$. Note that $UDU^T$ is the upper Cholesky decomposition of $K$, which exists and is unique for any symmetric positive definite matrix. 
Let $K^{\pi}$ denote the inverse covariance matrix $K$ with the rows and columns permuted according to the permutation $\pi$ and let $K^{\pi} = U^{\pi} D^{\pi} U^{\pi T}$ denote the upper Cholesky decomposition of $K^{\pi}$. The following result by~\citet{Pourahmadi99} links the entries of $U^{\pi}$ to conditional covariances.


\begin{theorem}[\citet{Pourahmadi99}]
	\label{LemCholesky}
	For every permutation $\pi$ and $j < k$,
	\begin{equation}
	\label{EqnCovPi}
	U^{\pi}_{jk} = 0 \qquad\Longleftrightarrow\qquad \cov(X_{\pi(j)}, X_{\pi(k)}|X_S) = 0,
	\end{equation}
	where $S = \{\pi(1), \pi(2),...,\pi(k-1)\}\setminus\{\pi(j)\}$.
\end{theorem}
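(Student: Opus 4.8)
The plan is to reduce to the identity permutation, reinterpret the Cholesky factor as a sequence of least-squares regressions, and then invoke the classical relationship between a multiple-regression coefficient and a conditional (partial) covariance. First I would relabel: since $K^{\pi}$ is by definition the inverse covariance matrix of the permuted vector $Y=(Y_1,\dots,Y_p)$ with $Y_i = X_{\pi(i)}$, it suffices to prove the claim for $\pi=\mathrm{id}$. That is, writing $K = U D U^T$ for the (unique) upper Cholesky factorization, with $U$ upper triangular and unit diagonal, I want to show that $U_{jk}=0$ if and only if $\mbox{cov}(Y_j,Y_k\mid Y_S)=0$, where $S=\{1,\dots,k-1\}\setminus\{j\}$; undoing the relabelling then yields the stated equivalence for general $\pi$.

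The second step identifies the entries of $U$ with regression coefficients. Set $\eta = U^T Y$. From $K = U D U^T$ we get $\Sigma_Y = (U^T)^{-1} D^{-1} U^{-1}$, so $\mbox{cov}(\eta) = U^T \Sigma_Y U = D^{-1}$, which is diagonal; hence the $\eta_i$ are mutually uncorrelated. Because $U$ is upper triangular with unit diagonal, $\eta_k = Y_k + \sum_{j<k} U_{jk}\,Y_j$, while inverting the triangular system $Y = (U^T)^{-1}\eta$ shows that each $Y_i$ with $i<k$ lies in the span of $\eta_1,\dots,\eta_{k-1}$. Since the $\eta_i$ have mean zero and $\eta_k$ is uncorrelated with $\eta_1,\dots,\eta_{k-1}$, it follows that $\eta_k$ is orthogonal to $Y_1,\dots,Y_{k-1}$. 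Therefore $\eta_k$ is exactly the residual of the least-squares regression of $Y_k$ on $(Y_1,\dots,Y_{k-1})$, and $-U_{jk}$ is the regression coefficient on $Y_j$.

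The crux is the third step, which converts a regression coefficient into a conditional covariance. Here I would invoke the standard partial-regression identity (Frisch--Waugh--Lovell, equivalently Cram\'er's partial-correlation formula): the coefficient on $Y_j$ in the regression of $Y_k$ on $(Y_1,\dots,Y_{k-1})$ equals $\mbox{cov}(Y_j,Y_k\mid Y_S)/\mbox{var}(Y_j\mid Y_S)$, with $S=\{1,\dots,k-1\}\setminus\{j\}$. Because $\mathbb{P}$ has positive measure everywhere, $\Sigma_Y$ is positive definite and $\mbox{var}(Y_j\mid Y_S)>0$, so this coefficient vanishes precisely when the conditional covariance vanishes. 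Combining with the previous step gives $U_{jk} = -\beta_j = 0 \iff \mbox{cov}(Y_j,Y_k\mid Y_S)=0$, which is the desired statement.

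The substantive content is concentrated entirely in the partial-regression identity; the reduction and the Cholesky-as-regression observation are bookkeeping. If a self-contained derivation is preferred over citing Frisch--Waugh--Lovell, I would obtain it by writing $Z=(Y_1,\dots,Y_{k-1})$ and $\beta = \Sigma_{ZZ}^{-1}\Sigma_{Zk}$, computing $\mbox{cov}(Y_j,Y_k\mid Y_S)$ as a Schur complement of the covariance of $(Y_j,Y_k,Y_S)$, and comparing the two via block inversion of $\Sigma_{ZZ}$; the key algebraic fact is that the $j$-th entry of $\Sigma_{ZZ}^{-1}\Sigma_{Zk}$ is the conditional covariance $\mbox{cov}(Y_j,Y_k\mid Y_S)$ normalized by $\mbox{var}(Y_j\mid Y_S)$. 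I expect the positive-definiteness hypothesis to be exactly what guarantees the normalizing variance is nonzero and hence makes the equivalence hold in both directions.
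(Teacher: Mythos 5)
The paper offers no proof of this statement: it is quoted as a known result of Pourahmadi~\cite{Pourahmadi99}, so there is no in-paper argument to compare yours against. Your proof is correct and is, in substance, the standard derivation behind Pourahmadi's modified Cholesky decomposition: the reduction to $\pi=\mathrm{id}$ is legitimate since $K^{\pi}$ is the inverse covariance of the permuted vector; the innovations $\eta=U^{T}Y$ with $\mathrm{cov}(\eta)=D^{-1}$ identify $-U_{jk}$ as the coefficient of $Y_j$ in the least-squares regression of $Y_k$ on $Y_1,\dots,Y_{k-1}$; and the partial-regression (Frisch--Waugh--Lovell) identity together with positive definiteness of $\Sigma$ (which gives $\mathrm{var}(Y_j\mid Y_S)>0$) turns vanishing of that coefficient into vanishing of the conditional covariance. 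The only step left implicit is that for Gaussian vectors the conditional covariance $\mathrm{cov}(Y_j,Y_k\mid Y_S)$ coincides with the partial covariance (the Schur complement) appearing in the regression identity; your self-contained Schur-complement sketch supplies exactly this, so nothing essential is missing.
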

Applying this result, we can adapt the SP algorithm to the Gaussian setting and replace step (1) of the algorithm by:
\vspace{0.15cm}
\par
\begingroup
\leftskip4em
\rightskip\leftskip
	\emph{For each permutation $\pi$ form $K^\pi$ and compute the Cholesky decomposition $K^{\pi} = U^{\pi} D^{\pi} U^{\pi T}$. Let $E^{\pi} \subset \{(1,2), (1,3),\dots, (1,p), (2,3),\dots, (p-1,p)\}$, where  $(i,j) \in E^{\pi}$ if and only if $U^{\pi}_{ij} \neq 0$.}
	\par
\endgroup
\vspace{0.15cm}

Denoting the number of non-zero off-diagonal elements of $U^{\pi}$ by $\|U^{\pi} \|_0 := \sum_{i <j}{\mathbf{1}(U^{\pi}_{ij} \neq 0)}$, then in the oracle Gaussian setting the SP algorithm is equivalent to the following optimization problem:
\begin{center}
	\begin{tabular}{r l}
		$\arg \min_{\pi}$ & $\|U^{\pi}\|_0$ \\
		\mbox{subject to} & $K^{\pi} = U^{\pi} D^{\pi} U^{\pi T} \quad \textrm{(Cholesky decomposition)}.$
	\end{tabular} 
\end{center}
Hence in the noiseless Gaussian setting the SP algorithm boils down to finding the permutation of the rows and columns of the inverse covariance matrix that provides the sparsest Cholesky factorization. Although finding the sparsest Cholesky decomposition is NP-complete~\citep{Yan81}, a number of heuristic methods exist; see \citep{DavisBook} for an overview and \citep{GeorgeLiu89} for a review of a particular algorithm, the \emph{minimum degree algorithm}.


This equivalence allows us to establish a connection to \emph{$\ell_0$-penalized maximum likelihood estimation} introduced by~\citet{geerpb12}. Given a sample covariance matrix $\widehat{\Sigma}_n = \frac{1}{n} \sum_{i=1}^{n} {X^{(i)} X^{(i) T}}$ based on $n$ i.i.d.~samples $X^{(i)}\sim\mathcal{N}(0, \Sigma^*)$, \citet{geerpb12} suggest learning $\mathcal{M}(G)$ by solving 
\begin{center}
	\begin{tabular}{r l}
		$\arg \min_{K, P, U, D}$ & $ \mbox{trace}(K \widehat{\Sigma}_n ) - \log \det(K) + C^2 \|U\|_0 $\\
		\mbox{subject to} & $K = P U D U^T P^T$,
	\end{tabular} 
\end{center}
where $P$ is a permutation matrix, $U$ an upper triangular matrix with all diagonal entries equal to $1$, $D$ a positive diagonal matrix, and $C^2$ a regularization parameter. They prove that if $C^2 = \mathcal{O}(\frac{\log p}{n})$ and under the so-called \emph{permutation beta-min condition}~\citep[page 9]{geerpb12} the penalized maximum likelihood estimator of $U$ corresponds to a DAG that is Markov with respect to $\mathcal{N}(0, \Sigma^*)$. 

In the oracle setting in which $\widehat{\Sigma}_n = \Sigma^*$ and $C \rightarrow \infty$, the $\ell_0$-penalized maximum likelihood approach reduces to
\begin{center}
	\begin{tabular}{r l}
		$\arg \min_{P, U, D}$ & $\|U\|_0$\\
		\mbox{subject to} & $(\Sigma^*)^{-1} = P U D U^T P^T,$
	\end{tabular} 
\end{center}
which is equivalent to the SP algorithm in the noiseless Gaussian setting. Hence, Theorem~\ref{ThmMainPCSP} implies that in the oracle setting $\ell_0$-penalized maximum likelihood estimation is consistent under strictly weaker conditions than faithfulness. This provides the first result that relates the permutation beta-min condition to the faithfulness assumption.

\section{Simulation comparison between SP and other algorithms}
\label{SecSimulations}

The stronger consistency guarantees obtained in this paper for the SP algorithm as compared to other causal inference algorithms suggest a higher recovery rate of $\mathcal{M}(G^*\!)$. In this section, we support our theoretical results with simulations on Gaussian DAG models  in the sample setting where the CI relations are inferred from data. In addition to the constraint-based PC and SGS algorithms, we compare the SP algorithm to the popular score-based GES and hybrid MMHC algorithms. For the PC, SGS, and GES algorithms we used the R package 'pcalg'~\citep{kaetal10}, while for the MMHC algorithm, we used the R package 'bnlearn'~\citep{Scutari10}. This is just a selection of algorithms and due to computational limits of our algorithm we only performed simulations on small DAGs of up to 8 nodes. The main purpose of this simulation study is to analyze empirically how much weaker the strong SMR assumption is as compared to the assumptions needed for consistency of other algorithms.

The simulation study was conducted as follows: $100$ realizations of a $p$-node random Gaussian DAG model were constructed with $p \in \{3,4,5,6,7,8\}$ for different expected neighborhood sizes (i.e., edge probabilities) with edge weights $A_{jk}$ chosen uniformly in $[-1,-0.25] \cup [0.25, 1]$, ensuring the edge weights are bounded away from $0$. The considered expected neighborhood sizes range from 0.2 (very sparse DAG) to $p-1$ (complete DAG). Subsequently, $n$ samples were drawn from the distribution induced by the Gaussian DAG model. We report the results for $n\in\{1000, 10000\}$ and $p \in \{5, 8\}$. The CI relations were estimated using Fisher's z-transform as outlined in Section~\ref{SecNoisy}. The size of this hypothesis test $\alpha \in \{ 0.01, 0.001, 0.0001 \}$ was selected since empirically these values led to a good performance of the PC and SGS algorithms on the considered examples. This is consistent with the findings in the simulation results by \citet{Kalisch07}. For GES and MMHC we used the BIC as scoring function.

Figures~\ref{fig_simulations_p5} and~\ref{fig_simulations_p8} display the proportion of simulations out of $100$ in which the algorithms recovered the skeleton $S(G^*)$ for different expected neighborhood sizes. An issue of the SP algorithm is that the output $\{G_{SP}\}$ will often contain multiple DAGs, some of which may not belong to the same Markov equivalence class. In practice, one would then output all DAGs and let the user decide which DAG is most appropriate. Here, however, we recorded this as a failure of the SP algorithm, making the comparison less favorable for the SP algorithm. Note that since adjacency faithfulness is a necessary and sufficient condition for recovering $S(G^*)$ for the SGS algorithm and the same holds for the SMR assumption for the SP algorithm, these figures also provide a direct comparison between the adjacency faithfulness and SMR assumptions. Figures~\ref{fig_add_edge_p8} and~\ref{fig_miss_edge_p8} show the proportion of simulations out of $100$ in which the algorithms outputted a skeleton with additional and missing edges respectively for 8-node random DAG models. Figures~\ref{fig_add_edge_p8} and~\ref{fig_miss_edge_p8} allow us to analyze the strengths and weaknesses of each algorithm.

The results in Figures~\ref{fig_simulations_p5} and~\ref{fig_simulations_p8} show that the SP algorithm recovers the true skeleton $S(G^*)$ more frequently than the constraint-based SGS and PC algorithms. In addition, Figures~\ref{fig_add_edge_p8} and~\ref{fig_miss_edge_p8} show that this is mainly due to the fact that the SGS and PC algorithms miss edges in the skeleton, thereby supporting our theoretical findings. As expected, the SGS and PC algorithms are similar in terms of performance, but the PC algorithm tends to include additional edges more often than the SGS algorithm (see Figure~\ref{fig_add_edge_p8}), while the SGS algorithm tends to miss edges more often than the PC algorithm (see Figure~\ref{fig_miss_edge_p8}). The SP algorithm also outperforms the GES and MMHC algorithms as long as the DAG is not fully connected (i.e.~neighborhood size of $p-1$). The increased performance of GES for fully connected DAGs can be explained by the tendency of GES to include many additional edges as seen in Figure~\ref{fig_add_edge_p8}. We only show the results for $p=5$ and $p=8$, but it is clear from our simulation results on $p\in\{3,4,5,6,7,8\}$ nodes that the performance of the SP algorithm increases compared to the performance of the PC, SGS, GES and MMHC algorithms for increasing number of nodes. 


\section{Discussion and future work}

In this paper, we showed that it is possible to learn the underlying Markov equivalence class under strictly weaker conditions than restricted-faithfulness, the weakest known sufficient condition for consistency of causal inference algorithms for learning $\mathcal{M}(G^*)$. The proposed SP algorithm is a hybrid method that chooses the permutation which yields the DAG with the fewest number of edges. We proved that the SP algorithm is consistent under the SMR assumption, which is strictly weaker than the restricted-faithfulness assumption and we also compared these assumptions via simulations. When the variables follow a Gaussian distribution, we showed that in the oracle setting the SP algorithm is equivalent to finding the sparsest Cholesky factorization of the inverse covariance matrix and to $\ell_0$-penalized maximum likelihood estimation.

The stronger consistency guarantees of the SP algorithm, however, come at a large computational cost, which makes it difficult to apply the SP algorithm on DAGs with more than 10 nodes. Our results indicate the presence of a statistical-computational trade-off for causal inference algorithms; see also~\citep{Spirtes_3faces}. While the focus of this work was on consistency guarantees, a greedy version of the SP algorithm that greedily optimizes over the space of permutations, is scalable to hundreds of nodes, and also comes with high-dimensional consistency guarantees has recently been developed~\citep{greedy_SP}. In line with the presence of a statistical-computational trade-off, the assumptions required for consistency of the greedy SP algorithm are stronger than the SMR assumption, but still weaker than restricted-faithfulness~\citep{greedy_SP}. 

It is an important area for future research to better understand this statistical-computational trade-off. Given unlimited computation time, we conjecture that the SP algorithm reaches the information-theoretic limit, i.e., that the SMR assumption is the weakest assumption that guarantees consistency of any algorithm for learning $\mathcal{M}(G^*)$. More generally, it would be interesting to know whether any of the available causal inference algorithms are optimal in terms of statistical-computational trade-off. A better understanding of this trade-off is crucial in order to choose the optimal method for different applications with a fixed computational budget or sample size.

While in this paper we did not make any distributional assumptions, it was recently shown that such assumptions may allow the identification of the underlying causal DAG (and not just its Markov equivalence class) and this without requiring the faithfulness assumption. This was shown for non-Gaussian linear models~\citep{Shimizu_2006} and models with additive noise~\citep{Peters_2011,Peters_2014,Hoyer_2009}. These are different kinds of statistical assumptions and it would be interesting to understand the statistical-computational trade-off also in this setting.




\begin{figure}[h!]
	\centering
	\subfigure[$p\!=\!5$, $n\!=\!10^3$, $\alpha\!=\!10^{-2}$]{\includegraphics[scale=0.22]{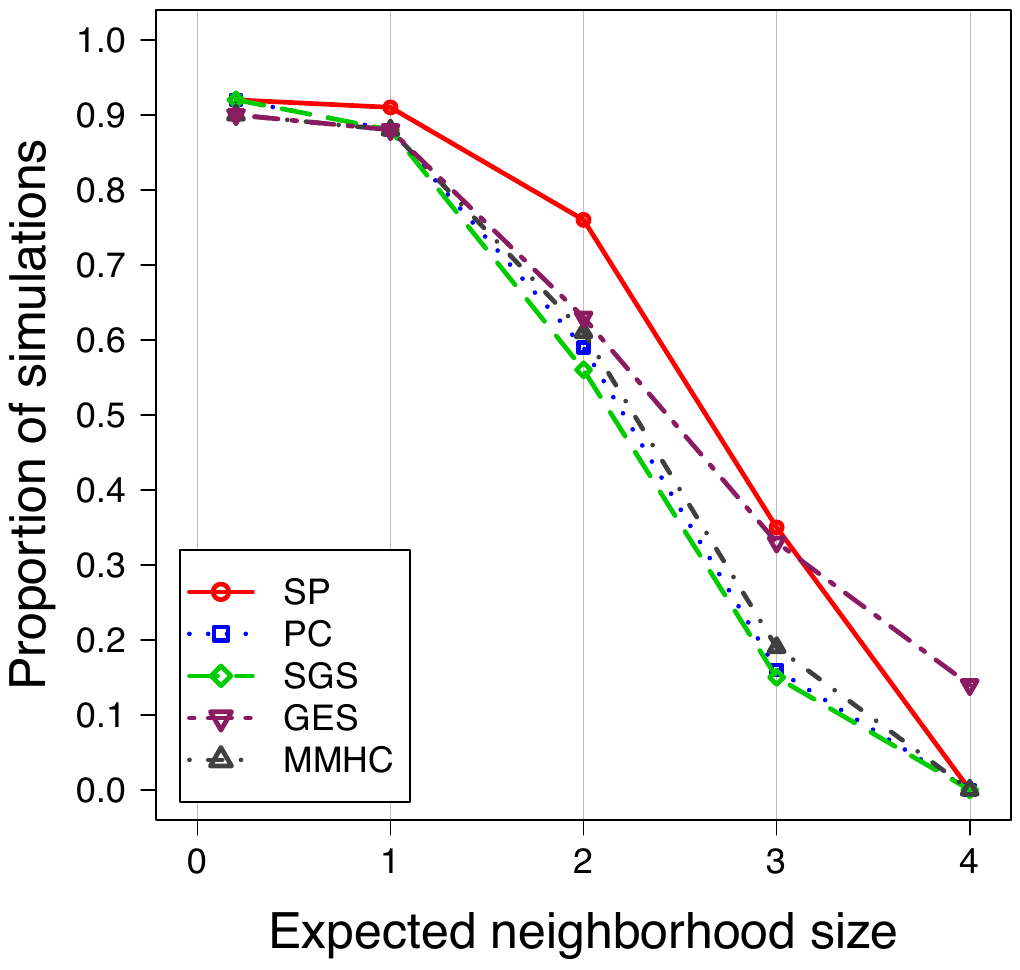}\label{fig:graph_1}}\;\,\,
	\subfigure[$p\!=\!5$, $n\!=\!10^3$, $\alpha\!=\!10^{-3}$]{\includegraphics[scale=0.22]{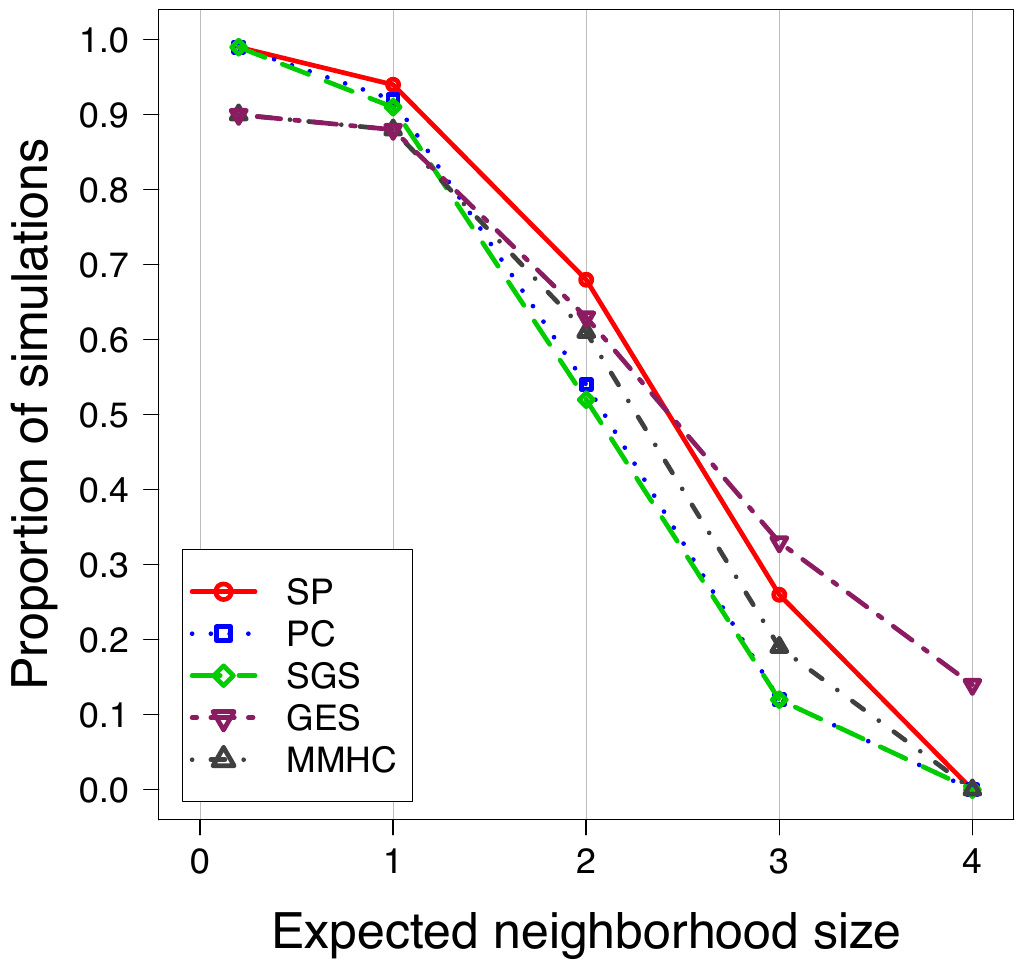}\label{fig:graph_2}}\;\,\,
	\subfigure[$p\!=\!5$, $n\!=\!10^3$, $\alpha\!=\!10^{-4}$]{\includegraphics[scale=0.22]{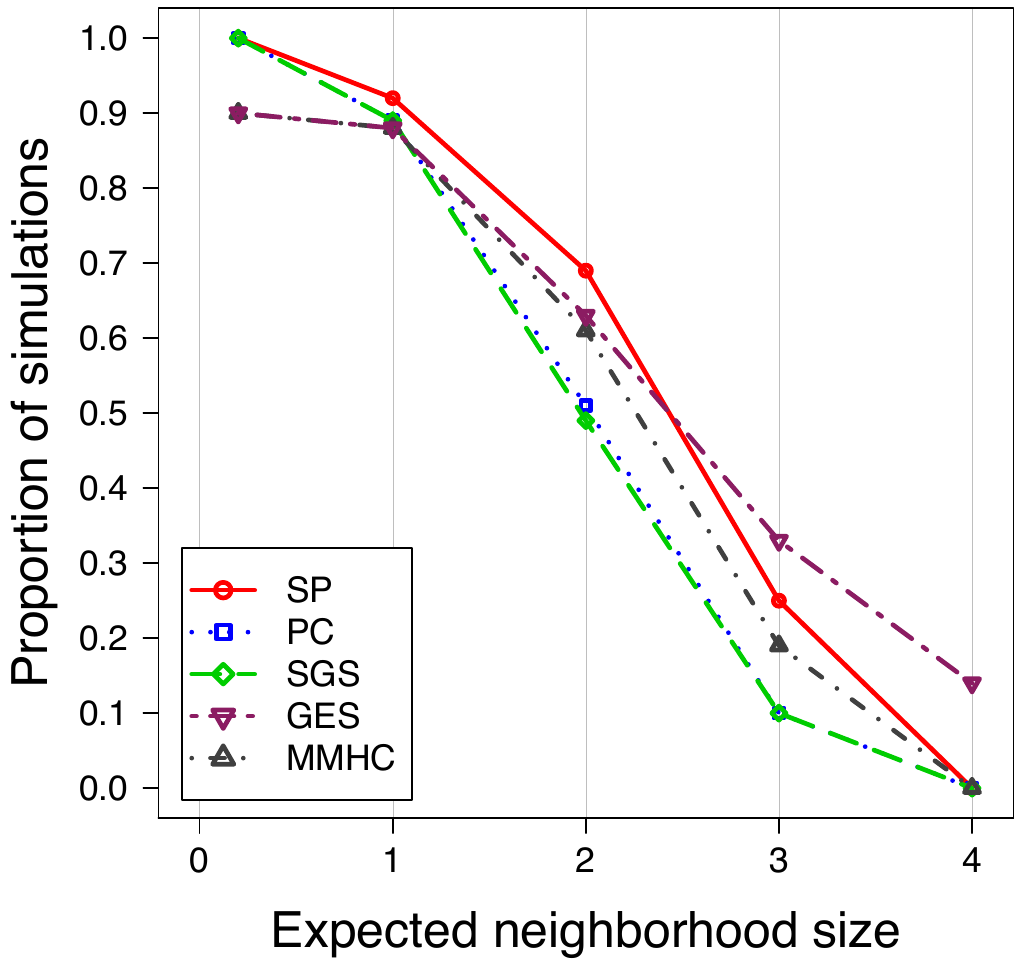}\label{fig:graph_3}}\\
	\subfigure[$p\!=\!5$, $n\!=\!10^4$, $\alpha\!=\!10^{-2}$]{\includegraphics[scale=0.22]{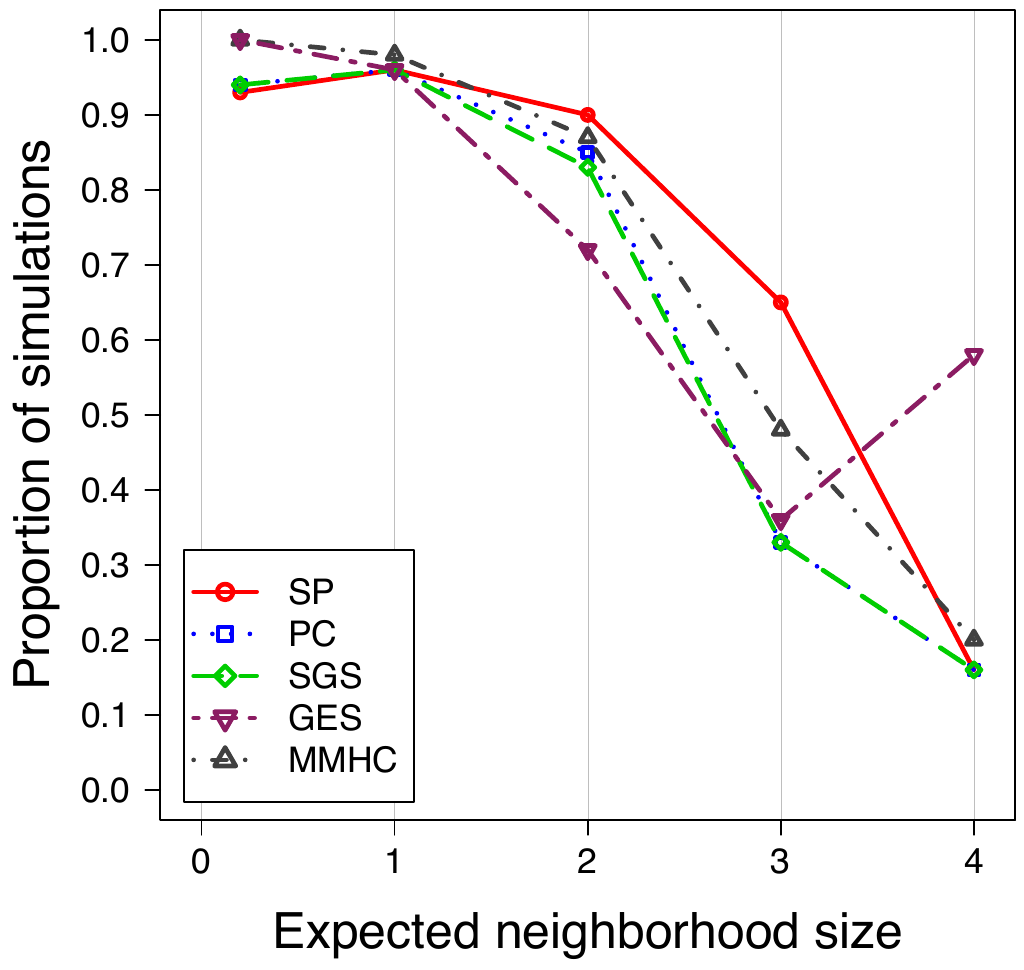}\label{fig:graph_4}}\;\,\,
	\subfigure[$p\!=\!5$, $n\!=\!10^4$, $\alpha\!=\!10^{-3}$]{\includegraphics[scale=0.22]{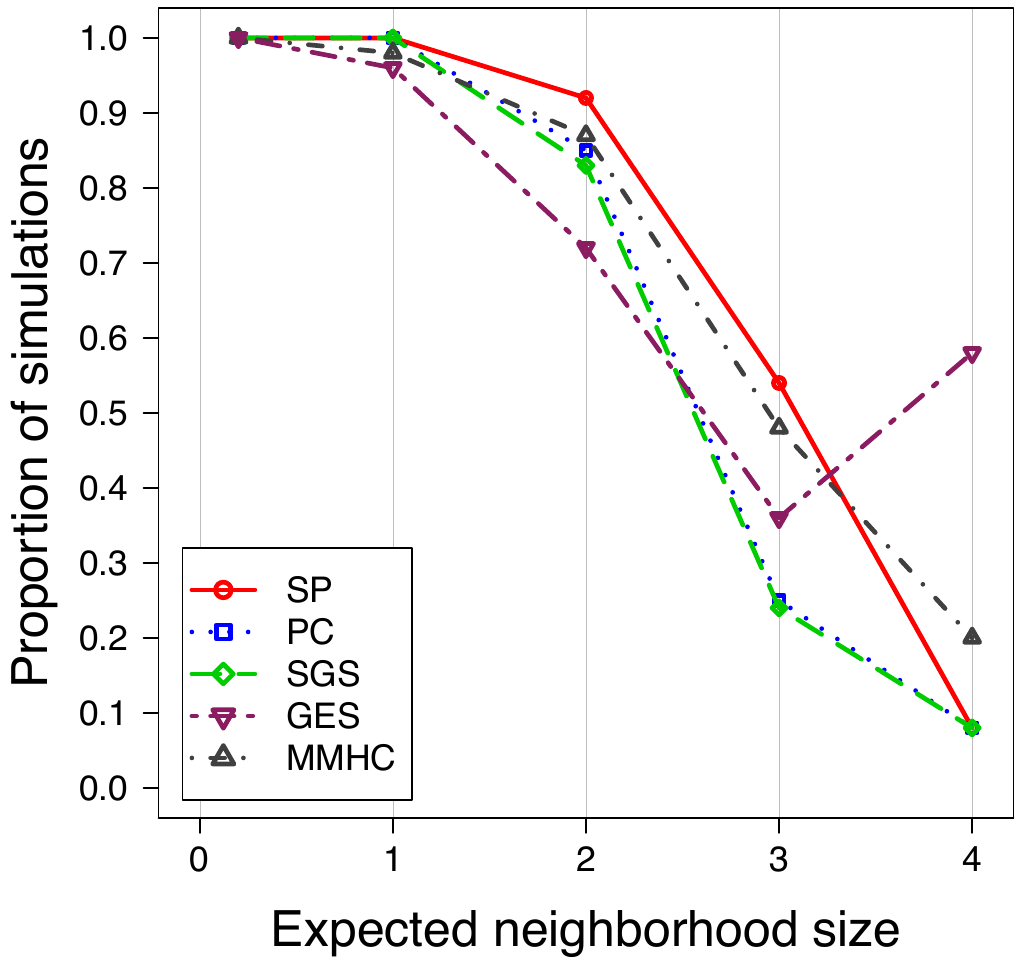}\label{fig:graph_5}}\;\,\,
	\subfigure[$p\!=\!5$, $n\!=\!10^4$, $\alpha\!=\!10^{-4}$]{\includegraphics[scale=0.22]{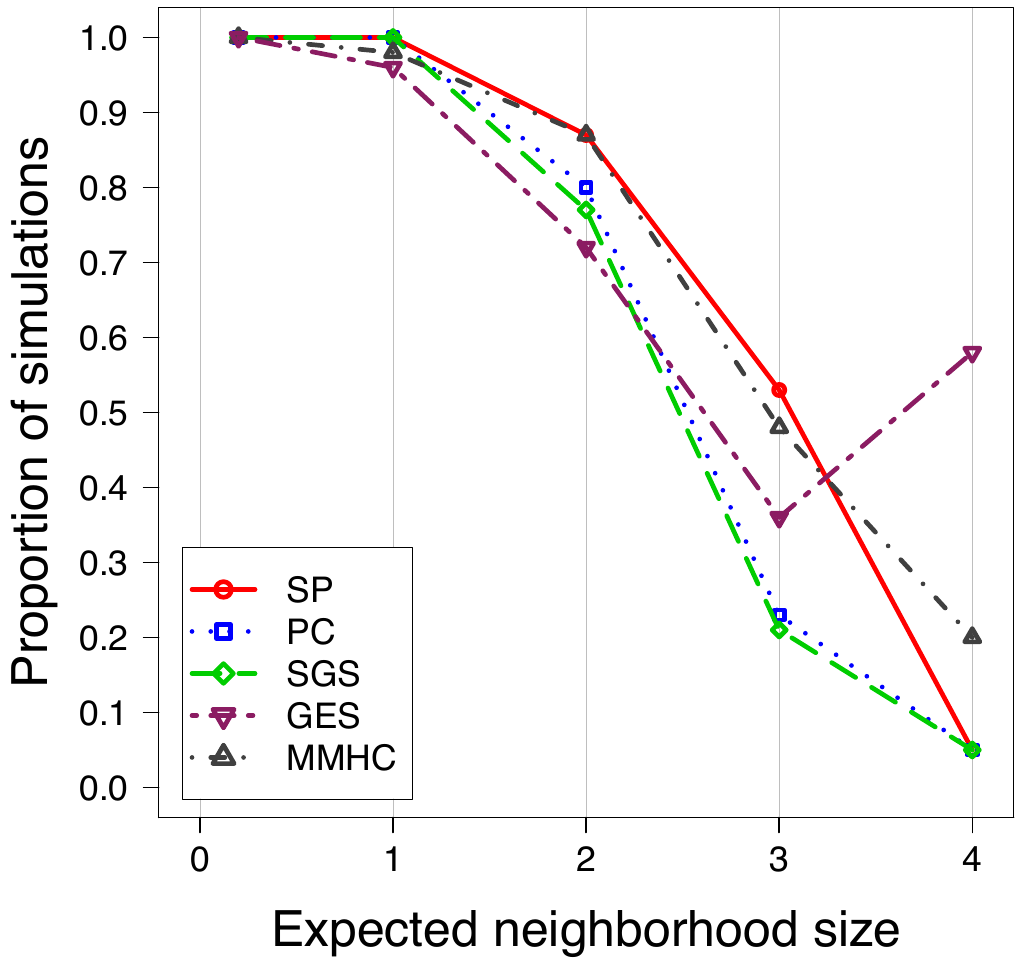}\label{fig:graph_6}}\\
	\caption{Proportion of $100$ simulations in which the algorithms recovered the skeleton $S(G^*)$ for 5-node random DAG models.}
	\label{fig_simulations_p5}
\end{figure}
\begin{figure}[h!]
	\centering
	\subfigure[$p\!=\!8$, $n\!=\!10^3$, $\alpha\!=\!10^{-2}$]{\includegraphics[scale=0.22]{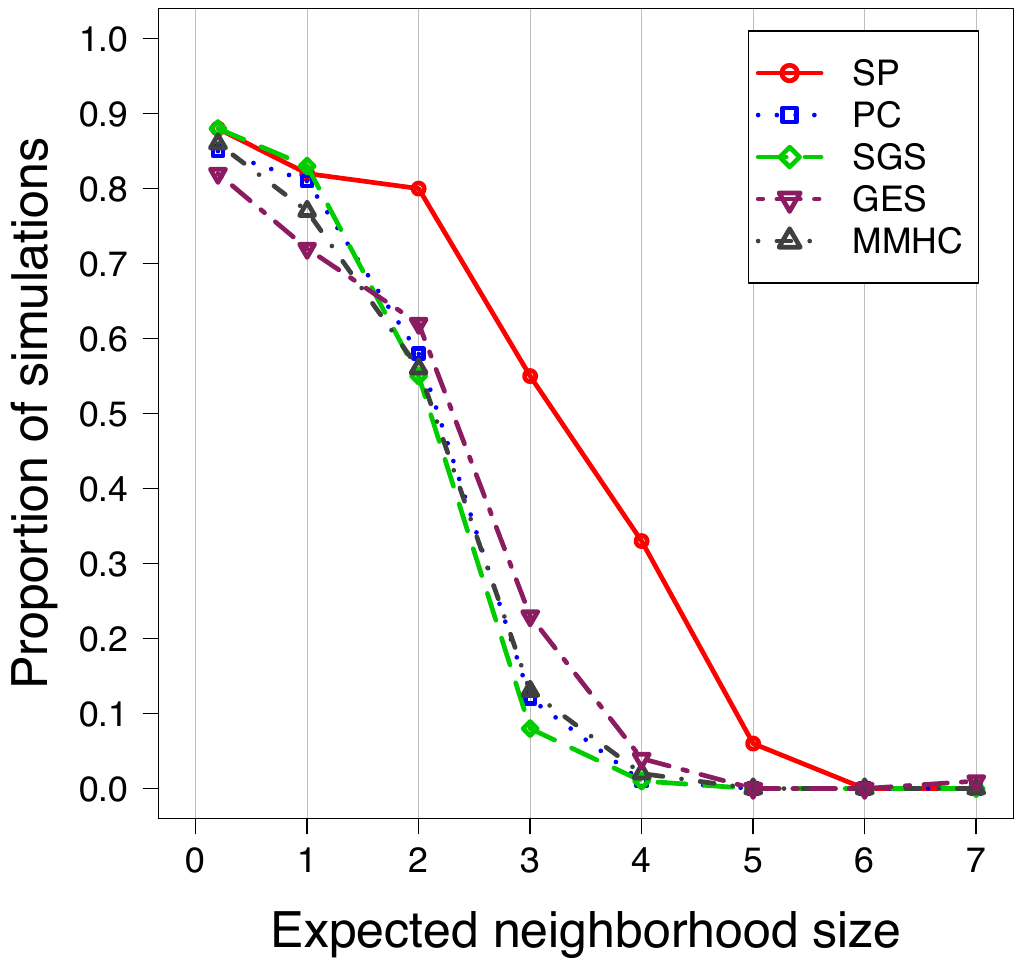}\label{fig:graph_7}}\;\;
	\subfigure[$p\!=\!8$, $n\!=\!10^3$, $\alpha\!=\!10^{-3}$]{\includegraphics[scale=0.22]{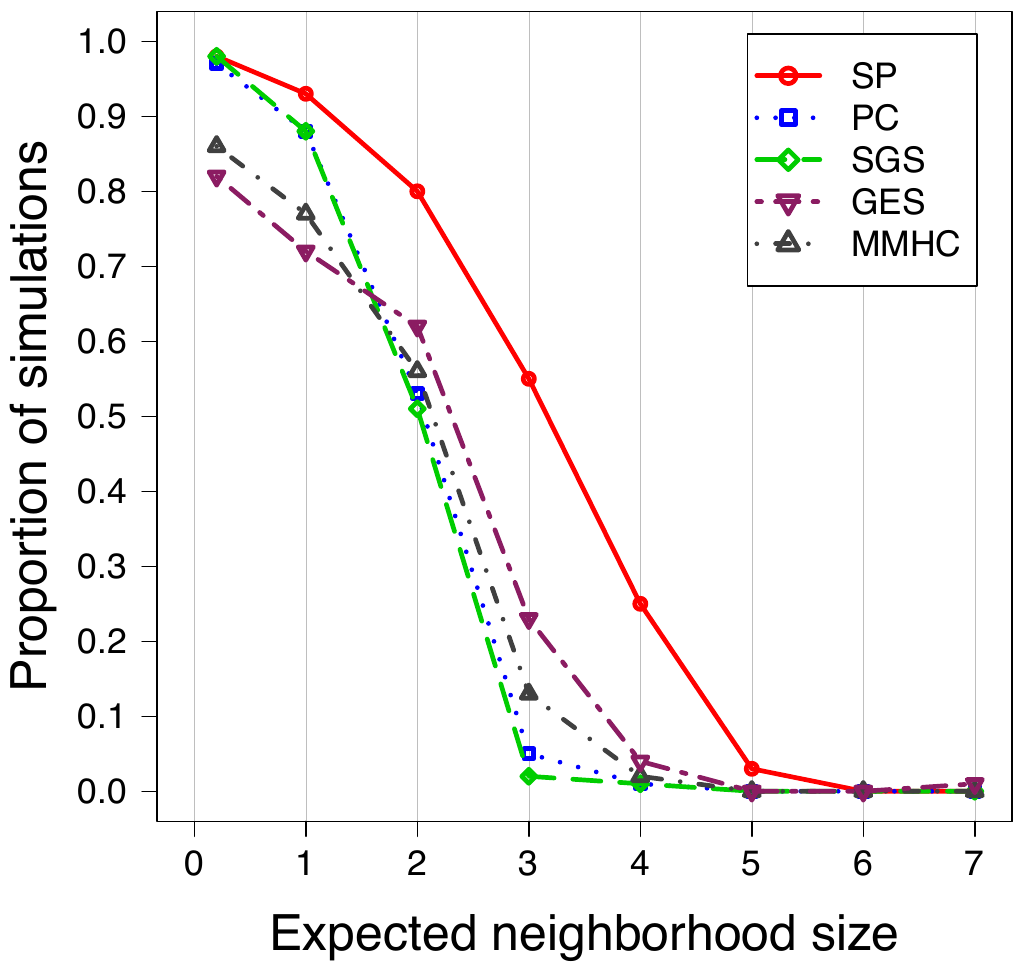}\label{fig:graph_8}}\;\;
	\subfigure[$p\!=\!8$, $n\!=\!10^3$, $\alpha\!=\!10^{-4}$]{\includegraphics[scale=0.22]{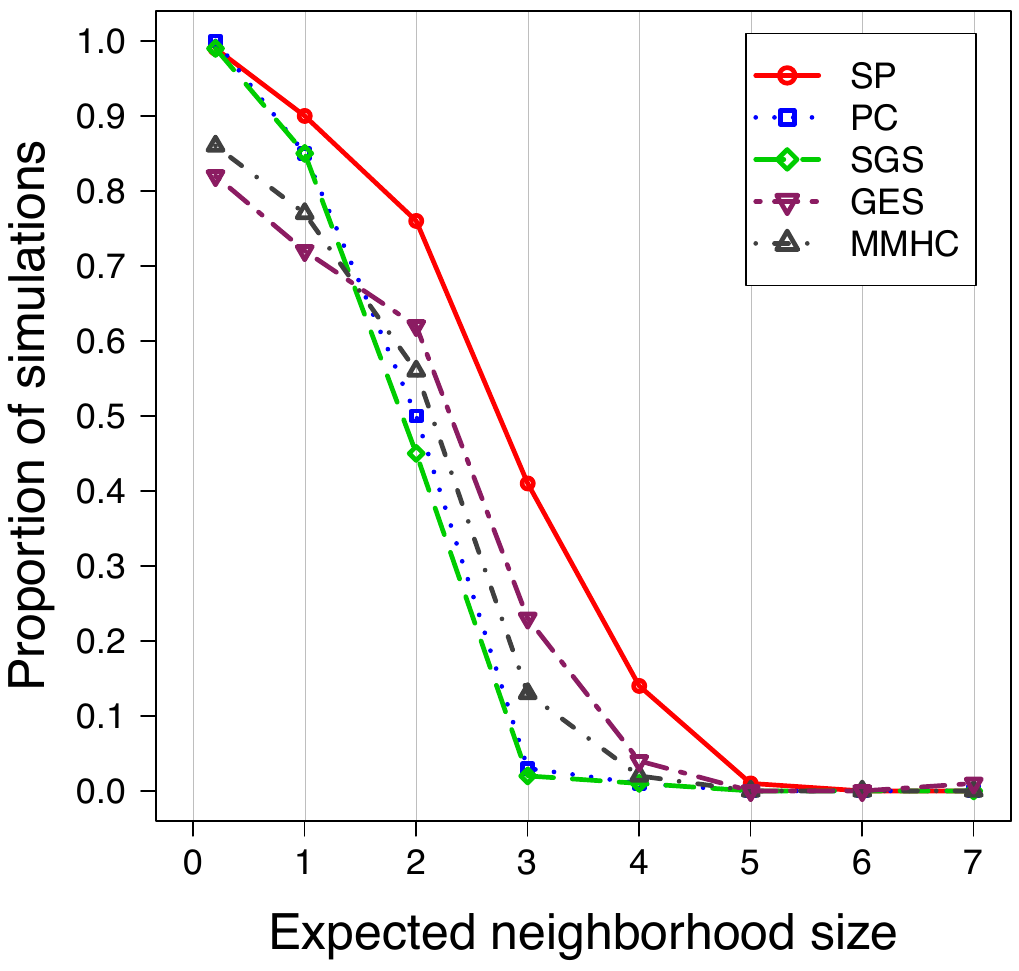}\label{fig:graph_9}}\\
	\subfigure[$p\!=\!8$, $n\!=\!10^4$, $\alpha\!=\!10^{-2}$]{\includegraphics[scale=0.22]{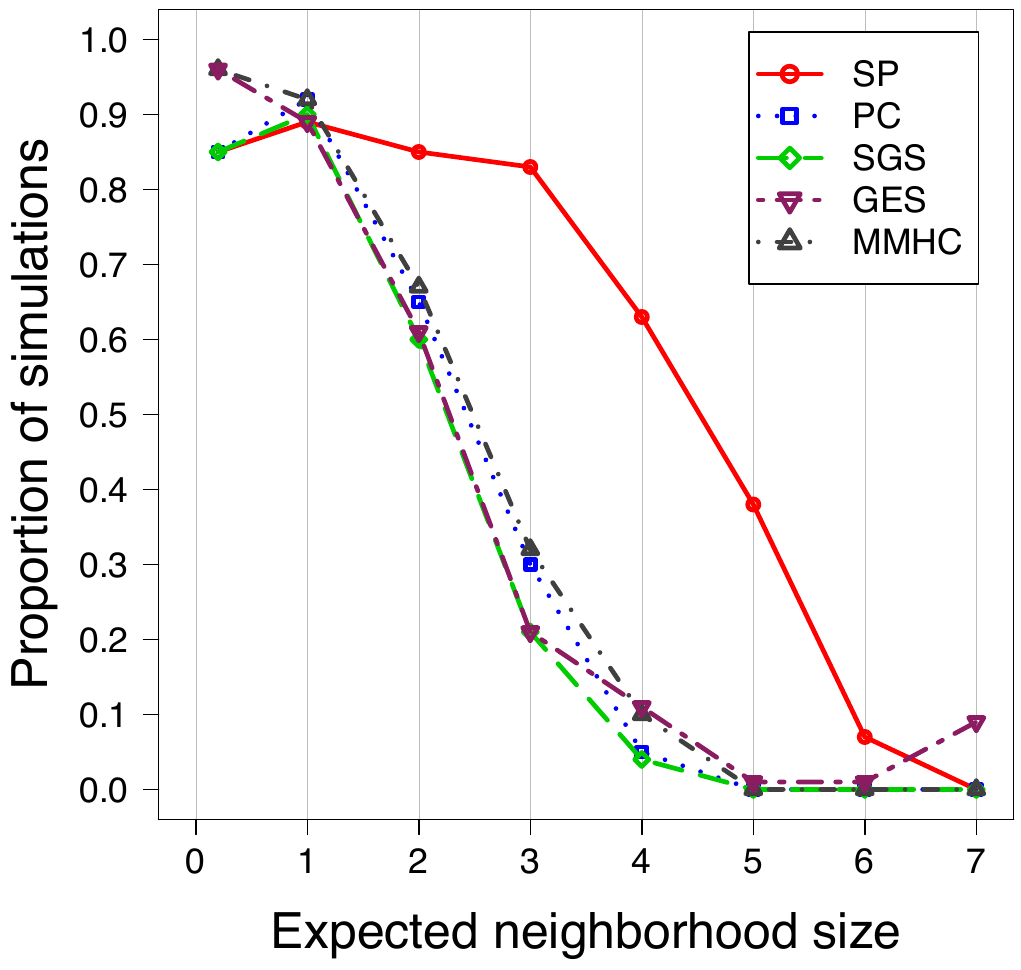}\label{fig:graph_10}}\;\;
	\subfigure[$p\!=\!8$, $n\!=\!10^4$, $\alpha\!=\!10^{-3}$]{\includegraphics[scale=0.22]{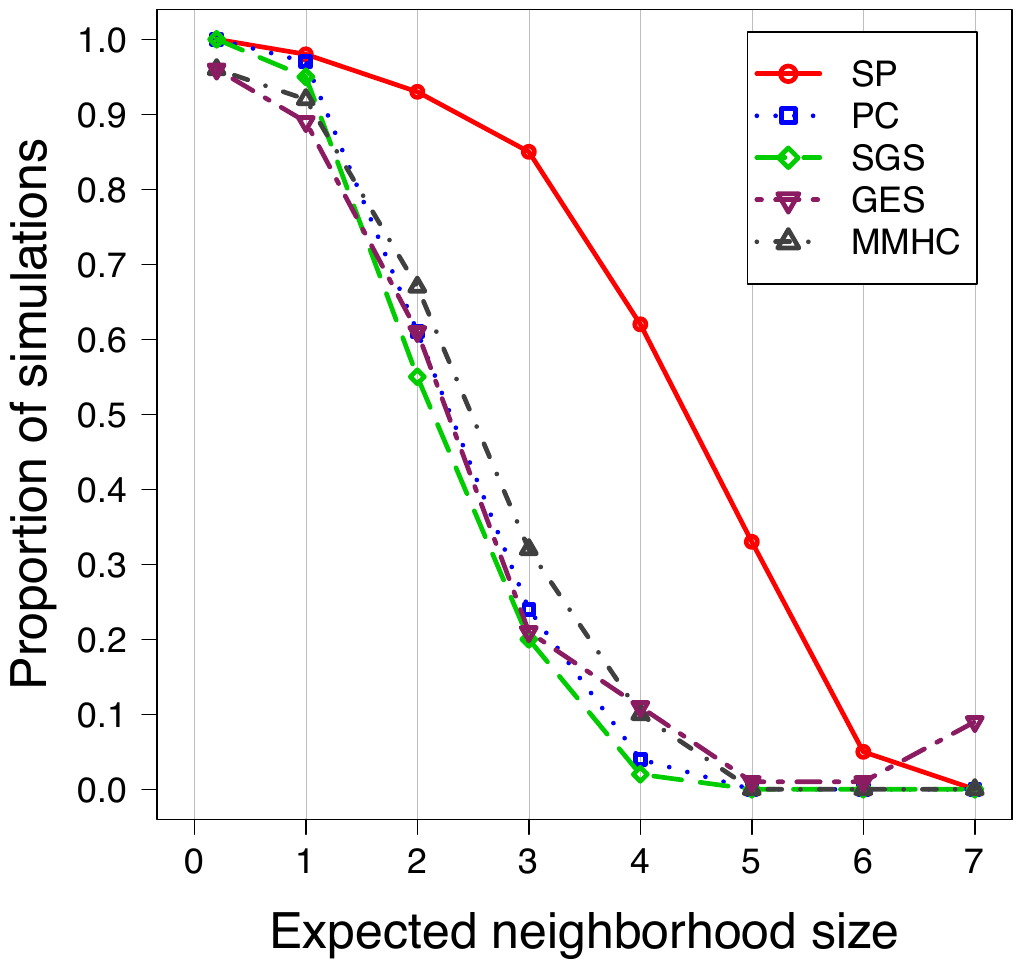}\label{fig:graph_11}}\;\;
	\subfigure[$p\!=\!8$, $n\!=\!10^4$, $\alpha\!=\!10^{-4}$]{\includegraphics[scale=0.22]{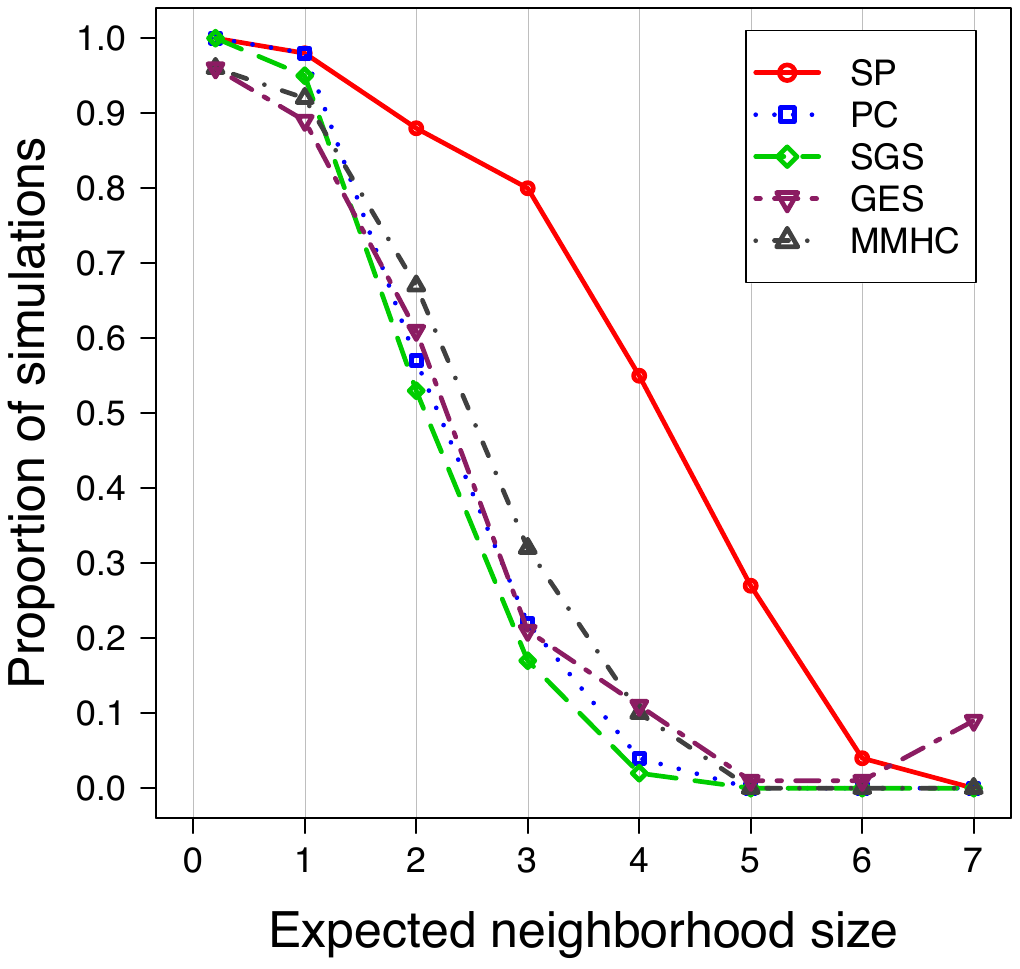}\label{fig:graph_12}}\\
	\caption{Proportion  of $100$ simulations in which the algorithms recovered the skeleton $S(G^*)$ for 8-node random DAG models.}
	\label{fig_simulations_p8}
\end{figure}

\begin{figure}[h!]
	\centering
	\subfigure[$p=8$, $n=10^4$, $\alpha=10^{-2}$]{\includegraphics[scale=0.22]{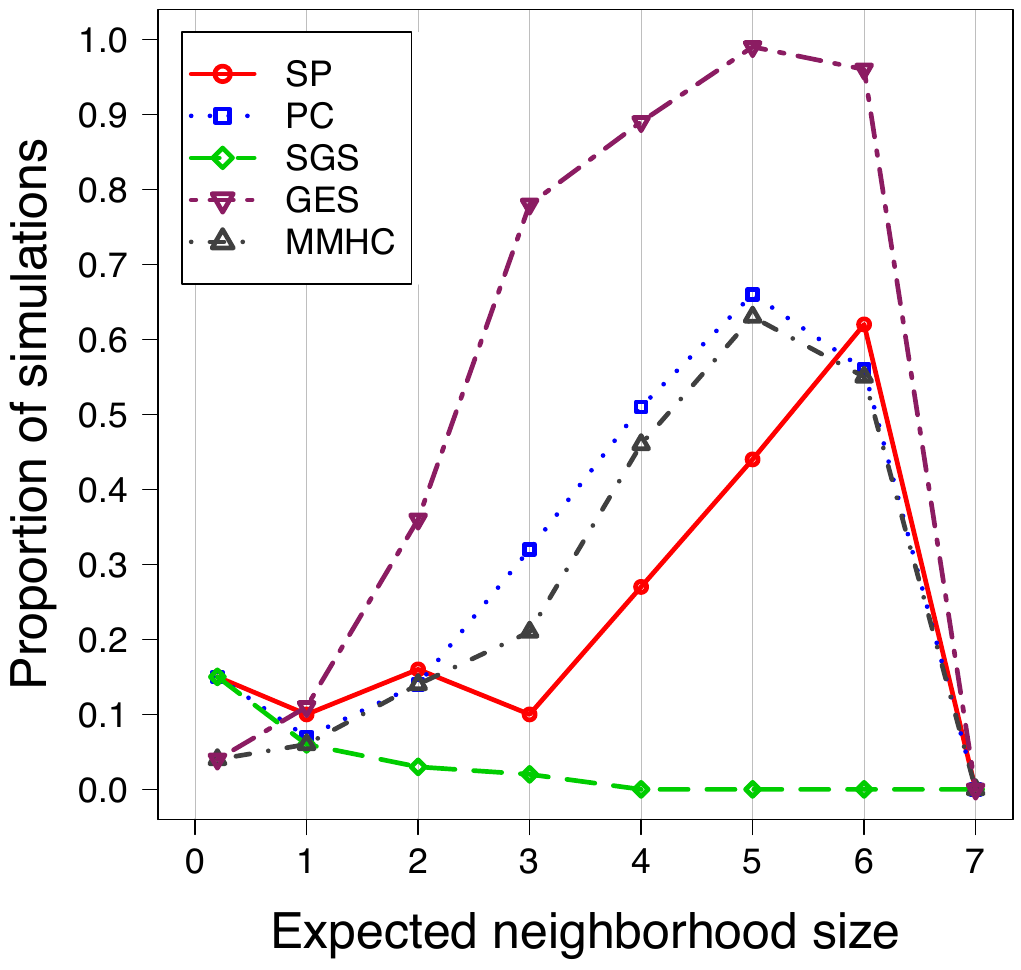}\label{fig:graph_13}}\;\;
	\subfigure[$p=8$, $n=10^4$, $\alpha=10^{-3}$]{\includegraphics[scale=0.22]{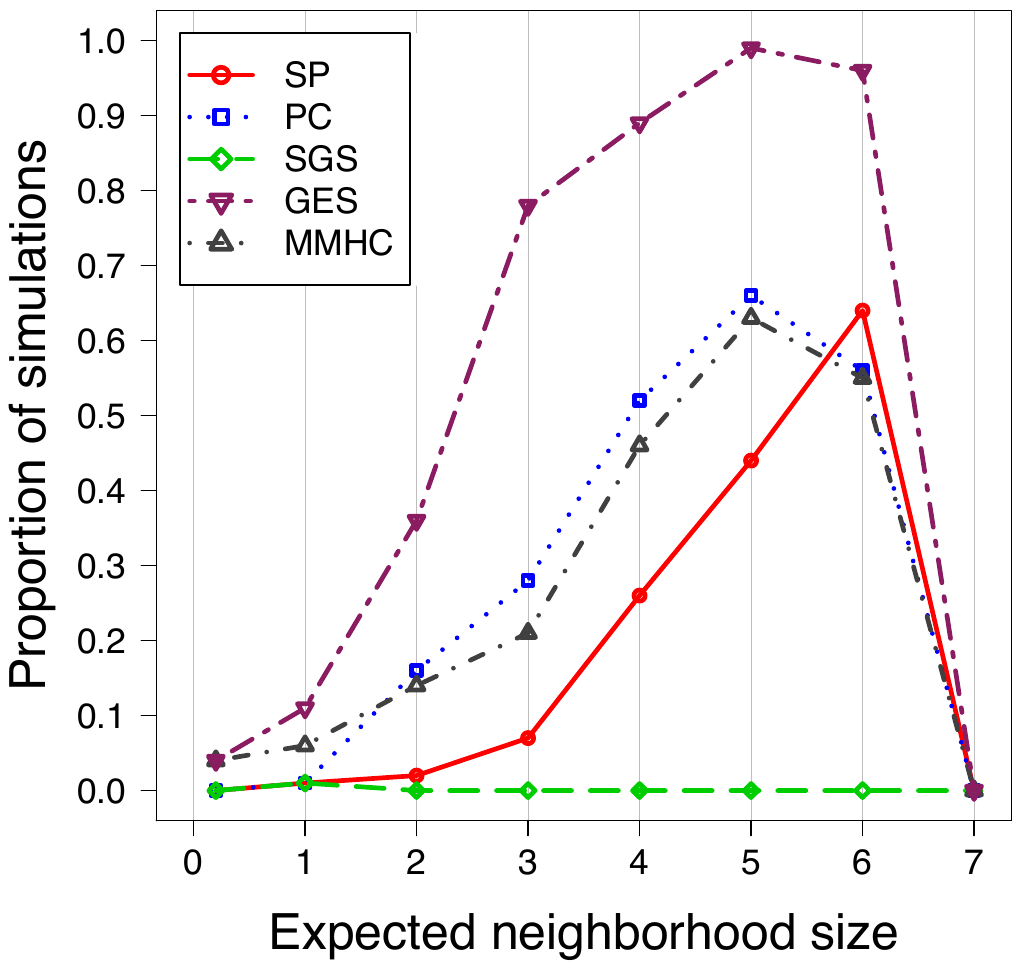}\label{fig:graph_14}}\;\;
	\subfigure[$p=8$, $n=10^4$, $\alpha=10^{-4}$]{\includegraphics[scale=0.22]{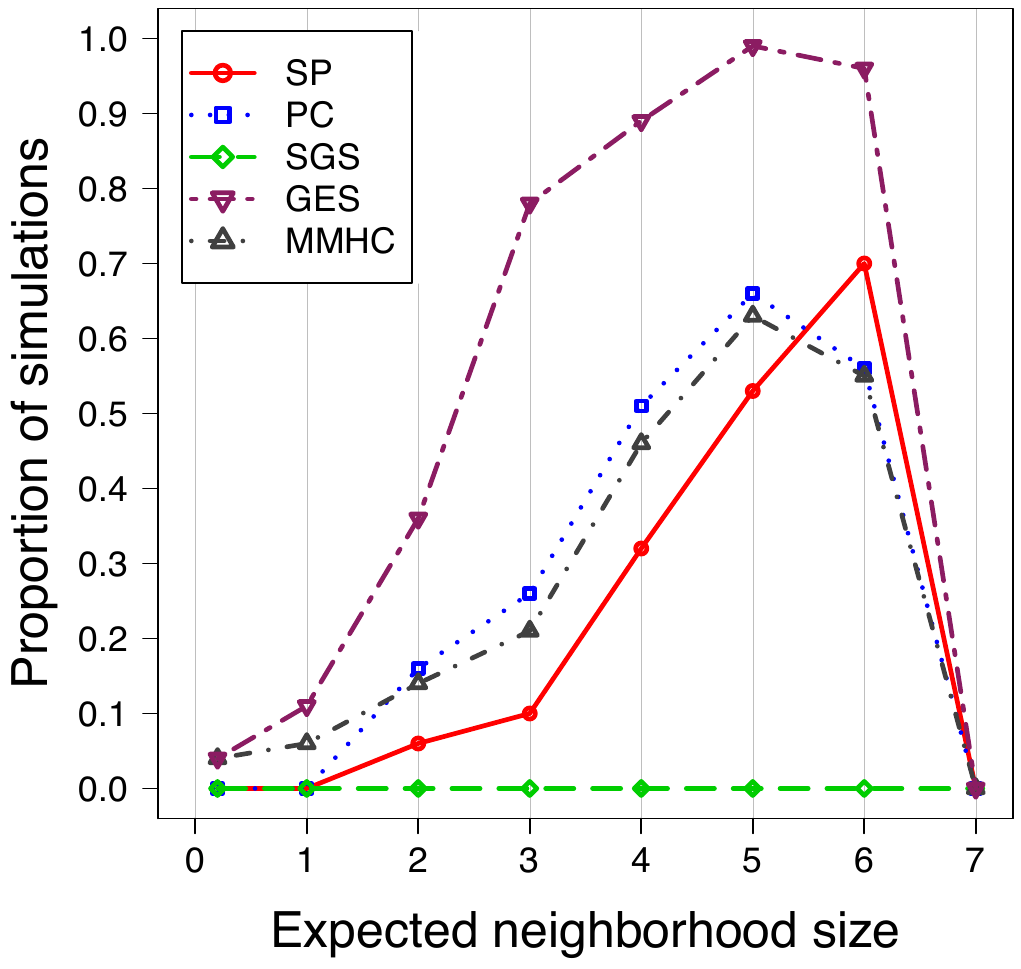}\label{fig:graph_15}}
	\caption{Proportion of simulations out of $100$ simulations in which the algorithms outputted a skeleton with additional edges for 8-node random DAG models.}
	\label{fig_add_edge_p8}
\end{figure}

\begin{figure}[h!]
	\centering
	\subfigure[$p=8$, $n=10^4$, $\alpha=10^{-2}$]{\includegraphics[scale=0.22]{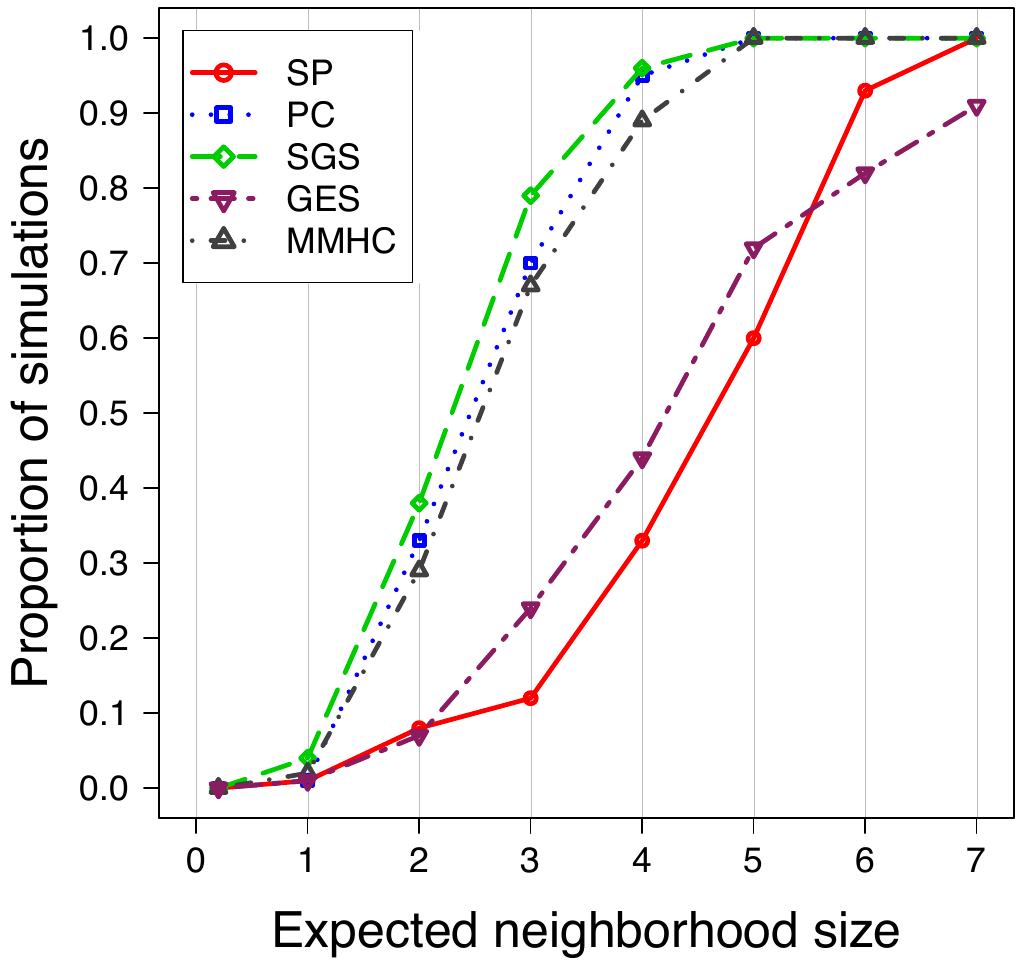}\label{fig:graph_16}}\;\;
	\subfigure[$p=8$, $n=10^4$, $\alpha=10^{-3}$]{\includegraphics[scale=0.22]{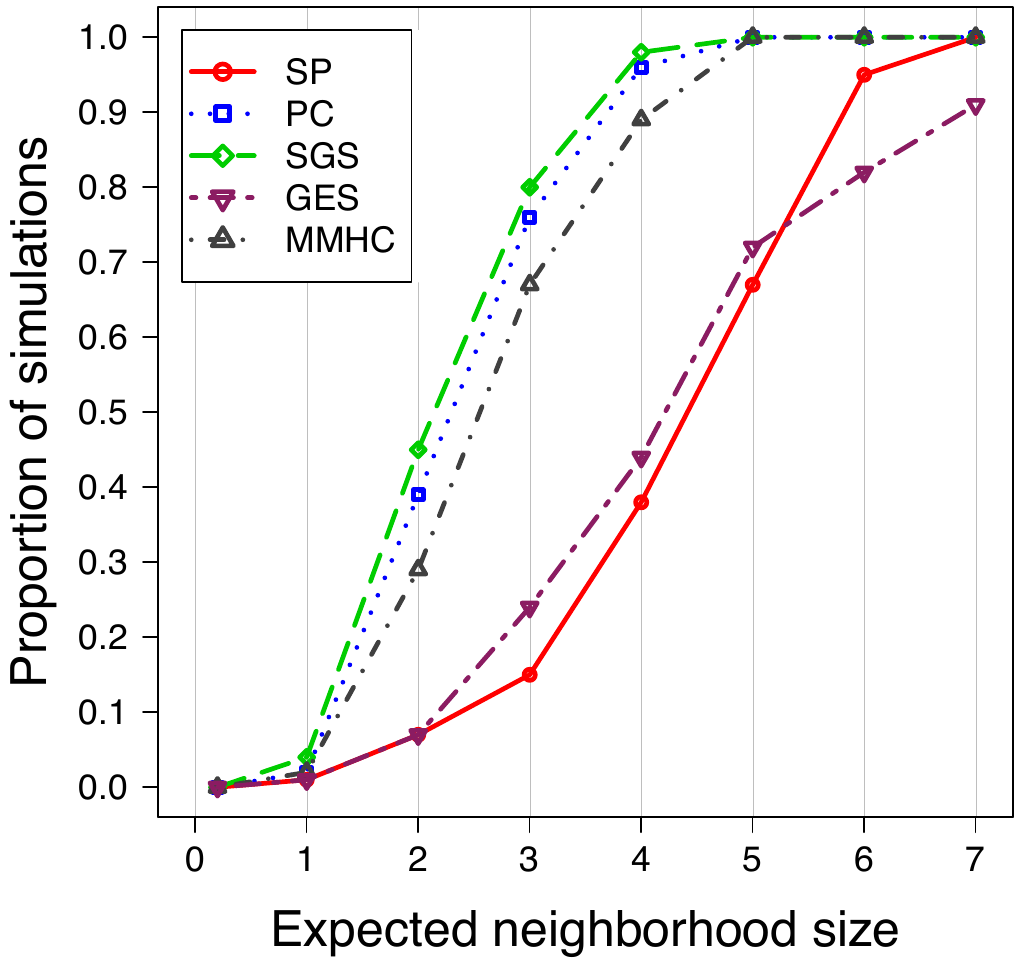}\label{fig:graph_17}}\;\;
	\subfigure[$p=8$, $n=10^4$, $\alpha=10^{-4}$]{\includegraphics[scale=0.22]{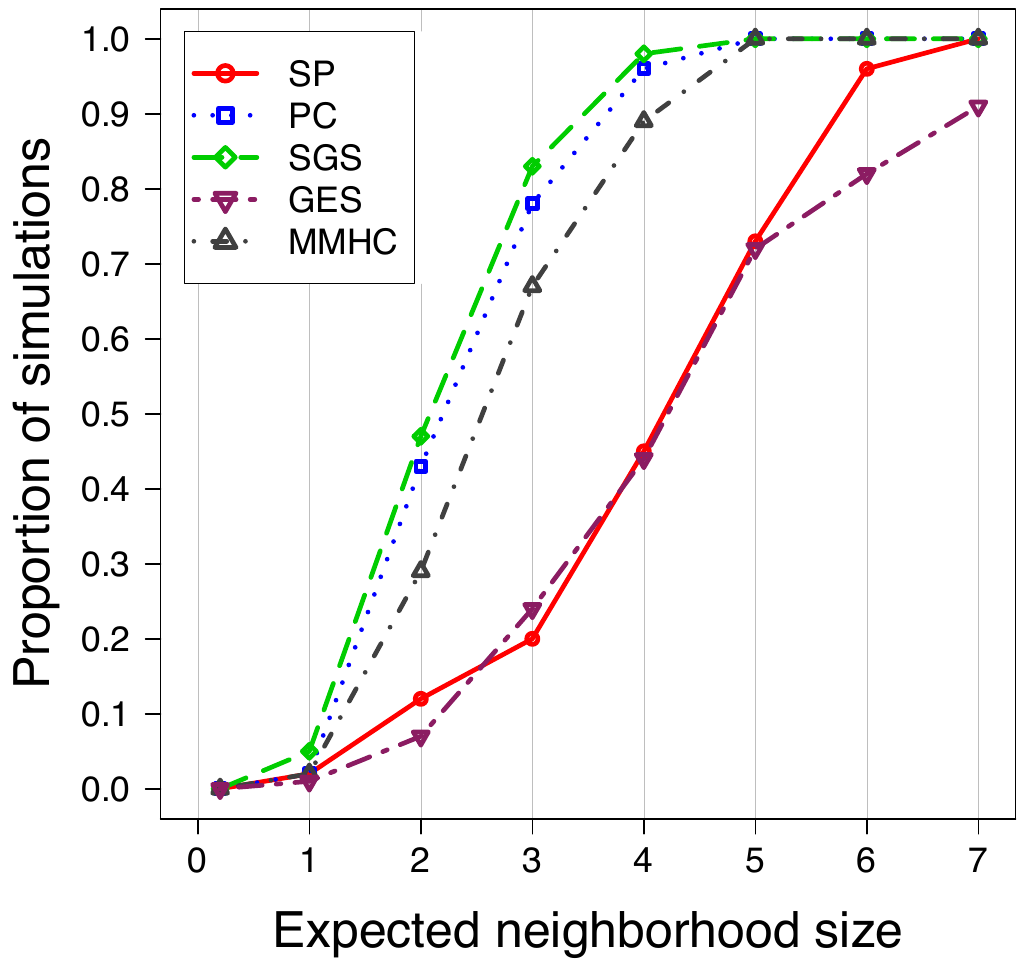}\label{fig:graph_18}}\\
	\caption{Proportion of simulations out of $100$ simulations in which the algorithms outputted a skeleton with missing edges for 8-node random DAG models.}
	\label{fig_miss_edge_p8}
\end{figure}

\clearpage

\bibliographystyle{wb_stat}
\bibliography{Biblio_Causal}

\begin{thebibliography}{37}
\newcommand{\enquote}[1]{`#1'}
\providecommand{\natexlab}[1]{#1}
\expandafter\ifx\csname urlstyle\endcsname\relax
  \providecommand{\doi}[1]{doi:\discretionary{}{}{}#1}\else
  \providecommand{\doi}{doi:\discretionary{}{}{}\begingroup
  \urlstyle{rm}\Url}\fi

\bibitem[{Bouckaert(1992)}]{B92}
Bouckaert, RR (1992), \enquote{Optimizing causal orderings for generating
  {DAGs} from data,} in \emph{Proceedings of the {E}ighth {I}nternational
  {C}onference on {U}ncertainty in {A}rtificial {I}ntelligence (UAI)}, pp.
  9--16.

\bibitem[{Chickering(1995)}]{chickering:95}
Chickering, DM (1995), \enquote{Learning {B}ayesian networks is {NP}-complete,}
  in \emph{Proceedings of the Fifth International Workshop on Artificial
  Intelligence and Statistics (AISTATS)}.

\bibitem[{Chickering(2002)}]{chick02}
Chickering, DM (2002), \enquote{Optimal structure identification with greedy
  search,} \emph{Journal of Machine Learning Research}, \textbf{3}, pp.
  507--554.

\bibitem[{Davis(2006)}]{DavisBook}
Davis, TA (2006), \emph{Direct Methods for Sparse Linear Systems}, SIAM,
  Philadelphia.

\bibitem[{Drton \& Perlman(2007)}]{Drton_Perlman}
Drton, M \& Perlman, MD (2007), \enquote{Multiple testing and error control in
  {G}aussian graphical model selection,} \emph{Statistical Science},
  \textbf{22}, pp. 430--449.

\bibitem[{Fisher(1915)}]{Fisher15}
Fisher, RA (1915), \enquote{Frequency distribution of the values of the
  correlation coefficient samples of an indefinitely large population,}
  \emph{Biometrika}, \textbf{10}, pp. 507--521.

\bibitem[{George \& Liu(1989)}]{GeorgeLiu89}
George, A \& Liu, J (1989), \enquote{The evolution of the minimum degree
  ordering algorithm,} \emph{SIAM Review}, \textbf{31}, pp. 1--19.

\bibitem[{Glymour et~al.(1987)Glymour, Scheines, Spirtes \& Kelly}]{GSSK87}
Glymour, C, Scheines, R, Spirtes, P \& Kelly, K (1987), \emph{Discovering
  Causal Strucure}, Academic Press.

\bibitem[{Hoyer et~al.(2009)Hoyer, Janzing, Mooij, Peters \&
  Sch\"olkopf}]{Hoyer_2009}
Hoyer, PO, Janzing, D, Mooij, JM, Peters, J \& Sch\"olkopf, B (2009),
  \enquote{Nonlinear causal discovery with additive noise models,} in
  \emph{{A}dvances in {N}eural {I}nformation {P}rocessing {S}ystems 21
  ({NIPS})}, pp. 689--696.

\bibitem[{Kalisch \& B\"uhlmann(2007)}]{Kalisch07}
Kalisch, M \& B\"uhlmann, P (2007), \enquote{Estimating high-dimensional
  directed acyclic graphs with the {PC}-algorithm,} \emph{Journal of Machine
  Learning Research}, \textbf{8}, pp. 613--636.

\bibitem[{Kalisch et~al.(2012)Kalisch, M\"achler, Colombo, Maathuis \&
  B\"uhlmann}]{kaetal10}
Kalisch, M, M\"achler, M, Colombo, D, Maathuis, M \& B\"uhlmann, P (2012),
  \enquote{Causal inference using graphical models with the {R} package
  {pcalg},} \emph{Journal of Statistical Software}, \textbf{47}, pp. 1--26.

\bibitem[{Lauritzen(1996)}]{Lauritzen}
Lauritzen, SL (1996), \emph{Graphical Models}, Oxford University Press.

\bibitem[{Lemeire et~al.(2012)Lemeire, Meganck, Cartella \& Liu}]{Lemiere}
Lemeire, J, Meganck, S, Cartella, F \& Liu, T (2012), \enquote{Conservative
  independence-based causal structure learning in absence of adjacency
  faithfulness,} \emph{International Journal of Approximate Reasoning},
  \textbf{53}, pp. 1305--1325.

\bibitem[{Pearl(1988)}]{Pearl88}
Pearl, J (1988), \emph{Probabilistic Reasoning in Intelligent Systems}, Morgan
  Kaufman, San Mateo.

\bibitem[{Pearl(2000)}]{pearl00}
Pearl, J (2000), \emph{Causality: Models, Reasoning and Inference}, Cambridge
  University Press.

\bibitem[{Peters et~al.(2011)Peters, Janzing \& Sch\"{o}lkopf}]{Peters_2011}
Peters, J, Janzing, D \& Sch\"{o}lkopf, B (2011), \enquote{Causal inference on
  discrete data using additive noise models,} \emph{IEEE Transactions on
  Pattern Analysis and Machine Intelligence}, \textbf{33}, pp. 2436--2450.

\bibitem[{Peters et~al.(2014)Peters, Mooij, Janzing \&
  Sch\"olkopf}]{Peters_2014}
Peters, J, Mooij, JM, Janzing, D \& Sch\"olkopf, B (2014), \enquote{Causal
  discovery with continuous additive noise models,} \emph{Journal of Machine
  Learning Research}, \textbf{15}, pp. 2009--2053.

\bibitem[{Pourahmadi(1999)}]{Pourahmadi99}
Pourahmadi, M (1999), \enquote{Joint mean-covariance models with applications
  to longitudinal data: Unconstrained parameterization,} \emph{Biometrika},
  \textbf{86}, pp. 677--690.

\bibitem[{Ramsey et~al.(2006)Ramsey, Zhang \& Spirtes}]{RamseySpirtesZhang}
Ramsey, J, Zhang, J \& Spirtes, P (2006), \enquote{Adjancency-faithfulness and
  conservative causal inference,} in \emph{Proceedings of the Twentieth
  Conference on Uncertainty in Artificial Intelligence (UAI)}, pp. 401--408.

\bibitem[{Robins et~al.(2003)Robins, Scheines, Spirtes \&
  Wasserman}]{RobScheiSpirtWas}
Robins, J, Scheines, R, Spirtes, P \& Wasserman, L (2003), \enquote{Uniform
  consistency in causal inference,} \emph{Biometrika}, \textbf{90}, pp.
  491--515.

\bibitem[{Scutari(2010)}]{Scutari10}
Scutari, M (2010), \enquote{Learning {B}ayesian networks with the bnlearn {R}
  package,} \emph{Journal of Statistical Software}, \textbf{35}, pp. 1--22.

\bibitem[{Shimizu et~al.(2006)Shimizu, Hoyer, Hyv\"arinen \&
  Kerminen}]{Shimizu_2006}
Shimizu, S, Hoyer, P, Hyv\"arinen, A \& Kerminen, A (2006), \enquote{A linear
  non-gaussian acyclic model for causal discovery,} \emph{Journal of Machine
  Learning Research}, \textbf{7}, pp. 2003--2030.

\bibitem[{Singh \& Valtorta(1993)}]{SV93}
Singh, M \& Valtorta, M (1993), \enquote{An algorithm for the construction of
  bayesian network structures from data,} in \emph{Proceedings of the Ninth
  International Conference on Uncertainty in Artificial Intelligence (UAI)},
  pp. 259--265.

\bibitem[{Solus et~al.(2017)Solus, Wang, Matejovicova \& Uhler}]{greedy_SP}
Solus, L, Wang, Y, Matejovicova, L \& Uhler, C (2017), \enquote{Consistency
  guarantees for permutation-based causal inference algorithms,} Tech. rep.,
  available at arXiv:1702.03530.

\bibitem[{Spirtes et~al.(2000)Spirtes, Glymour \& Scheines}]{spirtes:00}
Spirtes, P, Glymour, C \& Scheines, R (2000), \emph{Causation, Prediction and
  Search}, MIT Press.

\bibitem[{Spirtes \& Zhang(2014)}]{SpirtesZhang13}
Spirtes, P \& Zhang, J (2014), \enquote{A uniformly consistent estimator of
  causal effects under the k-triangle faithfulness assumption,}
  \emph{Statistical Science}, \textbf{29}, pp. 662--678.

\bibitem[{Teyssier \& Koller(2005)}]{TeyssierKoller}
Teyssier, M \& Koller, D (2005), \enquote{Ordering-based search: {A} simple and
  effective algorithm for learning {B}ayesian networks,} in \emph{Proceedings
  of the Twenty-First Conference on Uncertainty in Artificial Intelligence
  (UAI)}, pp. 584--590.

\bibitem[{Tsamardinos et~al.(2006)Tsamardinos, Brown \&
  Aliferis}]{Tsamardinos06}
Tsamardinos, I, Brown, LE \& Aliferis, CF (2006), \enquote{The max-min
  hill-climbing {B}ayesian network structure learning algorithm,} \emph{Machine
  Learning}, \textbf{65}, pp. 31--78.

\bibitem[{Uhler et~al.(2013)Uhler, Raskutti, B\"uhlmann \& Yu}]{UhlerRasBuYu}
Uhler, C, Raskutti, G, B\"uhlmann, P \& Yu, B (2013), \enquote{Geometry of
  faithfulness assumption in causal inference,} \emph{Annals of Statistics},
  \textbf{41}, pp. 436--463.

\bibitem[{van~de Geer \& B\"uhlmann(2013)}]{geerpb12}
van~de Geer, S \& B\"uhlmann, P (2013), \enquote{Penalized maximum likelihood
  estimation for sparse directed acyclic graphs,} \emph{Annals of Statistics},
  \textbf{41}, pp. 536--567.

\bibitem[{Verma \& Pearl(1988)}]{Verma86}
Verma, TS \& Pearl, J (1988), \enquote{Causal networks: semantics and
  expressiveness,} in \emph{Proceedings of the Fourth Annual Conference on
  Uncertainty in Artificial Intelligence (UAI)}, pp. 69--78.

\bibitem[{Verma \& Pearl(1990)}]{VermaPeal90}
Verma, TS \& Pearl, J (1990), \enquote{Equivalence and synthesis of causal
  models,} in \emph{Proceedings of the Sixth Conference on Uncertainty in
  Artificial Intelligence (UAI)}, pp. 220--227.

\bibitem[{Yannakakis(1981)}]{Yan81}
Yannakakis, M (1981), \enquote{Computing the minimum fill-in is {NP}-complete,}
  \emph{SIAM Journal of Algebraic Discrete Methods}, \textbf{2}, pp. 77--79.

\bibitem[{Zhang(2013)}]{Zhang13}
Zhang, J (2013), \enquote{A comparison of three {O}ccam's razors for
  {M}arkovian causal models,} \emph{British Journal for the Philosophy of
  Science}, \textbf{64}, pp. 423--448.

\bibitem[{Zhang \& Spirtes(2005)}]{ZhangSpirtes03}
Zhang, J \& Spirtes, P (2005), \enquote{Strong faithfulness and uniform
  consistency in causal inference,} in \emph{Proceedings of the Nineteenth
  Conference on Uncertainty in Artificial Intelligence (UAI)}, pp. 632--639.

\bibitem[{Zhang \& Spirtes(2008)}]{ZhangSpirtes08}
Zhang, J \& Spirtes, P (2008), \enquote{Detection of unfaithfulness and robust
  causal inference,} \emph{Minds and Machines}, \textbf{18}, pp. 239--271.

\bibitem[{Zhang \& Spirtes(2016)}]{Spirtes_3faces}
Zhang, J \& Spirtes, P (2016), \enquote{The three faces of faithfulness,}
  \emph{Synthese}, \textbf{193}, pp. 1011--1027.

\end{thebibliography}

\end{document}